\tikzstyle{nodo}=[circle,draw,fill,inner sep=0pt,minimum size=%
\tikzstyle{infinito}=[circle,inner sep=0pt,minimum size=0mm]
\newcommand\suchthat{:\,}
\newcommand\R{{\mathbb R}}
\newcommand\elevel{{\mathcal E}}
\newcommand\G{\mathcal G}
\newcommand\K{\mathcal K}
\newcommand\HH{\mathcal H}
\newcommand\vv{\textsc{v}}
\newcommand\diam{\mathop{\rm diam}}
\newcommand\eps{\varepsilon}
\newtheorem{theorem}{Theorem}[section]
\newtheorem{proposition}[theorem]{Proposition}
\newtheorem{lemma}[theorem]{Lemma}
\newtheorem{corollary}[theorem]{Corollary}
\theoremstyle{remark}
\newtheorem{remark}[theorem]{Remark}
\newtheorem*{remark*}{Remark}
\theoremstyle{definition}
\newtheorem{definition}[theorem]{Definition}
\date{}
\title{Threshold phenomena and existence results\\
for NLS ground states on graphs}
\author{Riccardo Adami\thanks{Author partially supported by the FIRB 2012 project ``Dispersive dynamics: Fourier
Analysis and Variational Methods".}, Enrico Serra\thanks{Author
partially supported by the PRIN 2012 project ``Aspetti
variazionali e perturbativi nei problemi differenziali
nonlineari".}, Paolo Tilli \\ \ \\{\small  Dipartimento di Scienze
Matematiche ``G.L. Lagrange'', Politecnico di Torino } \\ {\small
Corso Duca degli Abruzzi, 24, 10129 Torino, Italy}}
\begin{document}

\maketitle

\begin{abstract}
We investigate the existence of ground states of prescribed mass,
for the nonlinear Schr\"odinger energy on a noncompact metric graph $\G$.
While in some cases the topology of $\G$ may rule out or, on the contrary,
guarantee the existence of ground states of any given mass, in general
also some \emph{metric} properties of $\G$, and their quantitative relation
with the actual value of the prescribed mass, are relevant as concerns the
existence of ground states.
This may give rise to interesting phase transitions from nonexistence to existence
of ground states, when a certain quantity reaches a critical threshold.
\end{abstract}

\noindent{\small AMS Subject Classification: 35R02, 35Q55, 81Q35, 49J40, 58E30.}
\smallskip

\noindent{\small Keywords: Minimization, metric graphs, rearrangement,
  nonlinear Schr\"odinger \\ \hbox{} \hskip 1.65cm Equation.}

\section{Introduction}
In this paper we carry on our investigation, initiated in \cite{ast}, concerning
the existence of ground states for the NLS energy functional
\begin{equation}
\label{NLSe}
E (u,\G)
  =  \frac 1 2 \| u' \|^2_{L^2 (\G)}
- \frac 1 p  \| u \|^p_{L^p (\G)}
=\frac 1 2 \int_\G |u'|^2dx
-\frac 1 p \int_\G |u|^pdx
\end{equation}
on a noncompact
metric graph $\G$, under the
{\em mass constraint}
\begin{equation}
\label{mass}
\| u \|^2_{L^2 (\G)} \ = \ \mu.
\end{equation}
Throughout the paper, the exponent $p\in (2,6)$ is \emph{fixed}, 
while the mass $\mu$ is a parameter of the problem.
The domain $\G$ is a \emph{connected metric graph}, that is, a connected graph whose
edges are (possibly half-infinite) segments of line, joined at their endpoints (the vertices
of $\G$) according to the topology of the graph. Each edge $e$, after choosing a coordinate $x_e$ on it,
can be regarded either as an interval $[0,\ell_e]$, or as a positive half-line $[0,+\infty)$ (in this case
the edge is attached to $\G$ at $x_e=0$), and the spaces $L^r(\G)$, $H^1(\G)$ etc. can be
defined in a natural way (we refer to \cite{ast} for more details). Endowing $\G$ with the shortest path distance,
one obtains a locally compact metric space: when $\G$ consists of just one unbounded edge, for instance,
one obtains $\R^+$,
while $\R$ corresponds to two unbounded edges (for other examples, see Figures~\ref{bubbles}--\ref{3-fork}).

In this framework, by a ``ground state of mass $\mu$'' we mean a solution to
the minimization problem
\begin{equation}
\label{minprob}
\min_{u\in H^1_\mu(\G)} E(u,\G),\qquad
H^1_\mu(\G):=\left\{ u\in H^1(\G)\suchthat\Vert u\Vert_{L^2(\G)}^2=\mu\right\}.
\end{equation}
Since existence of solutions is trivial when $\G$ is compact, we
will always assume that $\G$ is \emph{noncompact} or,
equivalently, that at least one edge of $\G$ is unbounded.
Moreover, when dealing with a ground state $u$, we will always
assume that $u>0$ (up to a constant phase, as shown in \cite{ast},
this is not restrictive: for this reason, we only consider real
valued functions). Finally, we mention that any ground  state $u\in
H^1_\mu(\G)$ satisfies, for a suitable Lagrange multiplier
$\lambda$, the nonlinear equation
\begin{equation}
\label{euleroforte} u''+u |u|^{p-2}=\lambda u
\end{equation}
on every edge  of $\G$, coupled with a homogeneous \emph{Kirchhoff
condition} at every vertex of $\G$ (see \cite{ast} for more details).

The main results of \cite{ast} can be summarized as follows. On the one hand
it was proved that, apart from certain particular cases,
a topological condition on $\G$ called ``assumption (H)''
prevents
the existence of ground states for every value of  $\mu$.
If all the $\infty$-points of the half-lines of $\G$
are regarded
as a single vertex, 
this assumption  takes the form
\begin{equation*}
\tag{H}
\text{
$\quad\G$, as a graph, can be covered by cycles}
\end{equation*}
(observe that a noncompact $\G$
satisfying (H) must have \emph{at least two half-lines}).
On the other hand,
the case where $\G$ consists of two half-lines and
a finite interval, all emanating from the same vertex,
was studied in detail, and it was proved that a ground state
does exist for every $\mu$. Indeed this topology, namely a real line with an interval attached
at one endpoint, is the simplest one that violates
assumption (H), among graphs with at least two half-lines.

Thus, regardless of the prescribed mass $\mu$,
certain topologies
of $\G$ rule out the existence of ground states, while, in view of the mentioned example,
other topologies may guarantee their existence.

\smallskip

The main purpose of this paper is to provide general sufficient conditions on $\G$, not only topological but
also of a \emph{metric} nature, to guarantee the existence of a ground state of prescribed mass $\mu$.
On a general ground
the outcome is that, given the topology of $\G$,  the existence
of ground states will strongly depend on the interplay between
the metric properties of $\G$ (i.e. the actual
lengths of its bounded edges) and the value of $\mu$. A typical example
is Proposition~\ref{corbaffolungo}: if $\G$ has a terminal edge of length $\ell$,
and the product $\ell\mu^\beta$  is large enough
($\beta=\frac {p-2}{6-p}$), then $\G$ admits a ground state.
Observe that $\mu$ and the metric properties
of $\G$ are related in a natural way, due to how the minimization problem \eqref{minprob}
scales, under homotheties of $\G$ (see Remark~\ref{remscal}): in particular, $\mu^{-\beta}$ scales
as a \emph{length}.

Central to our investigation is the ground-state energy level
\begin{equation}
\label{defiG}
\elevel_\G(\mu):=\inf\left\{E(u,\G)\suchthat u\in H^1_\mu(\G)\right\},\quad \mu\geq 0,
\end{equation}
regarded as a function of $\mu$, for fixed $\G$. In \cite{ast}  it was proved that
\begin{equation}
\label{trapped}
\elevel_{\R^+}(\mu)\,
\leq\,\elevel_\G(\mu)\,\leq\, \elevel_{\R}(\mu)
\end{equation}
for every noncompact $\G$. The quantities on the right and on the left are, respectively,
the energy level of a \emph{soliton} of mass $\mu$ on the real line, and the energy level
of a \emph{half-soliton} of mass $\mu$ on the positive half-line. Thus, in a sense, $\R^+$ and $\R$
are extremal, among noncompact graphs, as regards the ground-state energy level.

As we shall prove in Theorem~\ref{main1}, if the second inequality in \eqref{trapped}
is \emph{strict}, then $\G$ admits a ground state of mass $\mu$.
In other words, the existence of a minimizer for \eqref{minprob} is guaranteed,
as soon as one constructs a function $u\in H^1_\mu(\G)$ with an energy level
\emph{not higher}
than the energy level of a soliton on the real line (Corollary~\ref{oper}):
this 
is quite effective in the applications (see the examples at the end of Section \ref{exi})
since, starting from a soliton $\phi_\mu$ on $\R$, one may try to ``cut pieces''
of $\phi_\mu$ and, possibly after monotone rearrangements, paste them on the graph $\G$,
to obtain a competitor $u\in H^1_\mu(\G)$ with a lower energy.

This result also entails that, whenever $\G$ admits no ground state of mass $\mu$, its
energy level $\elevel_\G(\mu)$, though not achieved by
any function $u\in H^1_\mu(\G)$, is necessarily equal to that  of the soliton.

We also prove (Theorem~\ref{subadd}) that $\elevel_\G(\mu)$ is \emph{strictly} subadditive and concave.
This information allows us to completely characterize
the behavior of any minimizing sequence $\{u_n\}$, relative to \eqref{minprob}, by a dichotomy principle
(Theorem~\ref{ps}): if $u_n\rightharpoonup u$ in $H^1(\G)$, then
\emph{either} $u_n\to u$  \emph{strongly} (and $u\in H^1_\mu(\G)$ is a minimizer), \emph{or}
$u\equiv 0$ (and $u_n$, in this case, loses \emph{all} of its mass at infinity).

In general, the weak compactness of minimizing sequences is guaranteed by a set
of new a priori estimates (Lemma~\ref{propgen}), which are \emph{universal}
in the following sense:
they are given in terms of powers of $\mu$ and numerical constants $C$, that
depend only on the exponent $p$ and \emph{not} on the particular graph $\G$.
This, in turn, is a consequence
of the universal Gagliardo-Nirenberg inequality \eqref{GN-universal}.

As regards the existence of ground states, interesting \emph{phase transitions} may occur
if, for fixed $\mu$, one alters the metric properties of $\G$ without changing its topology
(and, dually, the same may occur if $\mu$ is altered while $\G$ is unchanged). Here
we start the investigation of this phenomenon by a case study, namely when $\G$ is made up
of $N$ half-lines ($N\!\geq\! 2$) and a pendant edge of length $\ell$, all emanating from the same vertex.
When $N=2$ it is known from \cite{ast} that a ground state exists for every
$\ell,\mu>0$. When $N>2$, we prove (Theorem~\ref{teoksemirette}) that there exists a number $C^*>0$,
depending only on $p$ and $N$, such that a ground state exists if and only if $\ell \mu^{\beta}\geq C^*$ ($\beta =\frac{p-2}{6-p}$).
Of course, when $\ell \mu^{\beta}$ is large enough, the existence of a ground state follows
from Proposition~\ref{corbaffolungo}: the real point is that 
a \emph{sharp phase transition}, from nonexistence to existence of a ground state, occurs at $C^*$
(if $\mu$ is fixed, the phase transition occurs at the critical length $\ell^*=C^*\mu^{-\beta}$ while,
if $\ell$ is fixed, it occurs at the critical mass $(\ell/C^*)^\beta$). It is an open problem to
establish if such a sharp transition is a general fact (a quite strong reinforcement of Proposition~\ref{corbaffolungo}),
or if (and to what extent) it is peculiar to this example
(our proof builds on Theorem~\ref{stab}, a stability result of general validity, but the
particular structure of $\G$ is somehow exploited, to prove that ground states persist when $\ell$ is enlarged).
We point out that this
topology is the simplest one that violates assumption (H), among all metric graphs  with $N$ half-lines.

Finally, an issue that remained open from \cite{ast} was to establish whether a metric graph
$\G$, having just \emph{one half-line}, always admits a ground state (in this case
assumption (H) is automatically violated, and
ground states may indeed exist: see Figures~\ref{tadpole}, \ref{3-fork}).
As a metric graph, any such $\G$ is just a compact perturbation of $\R^+$, hence
a competitor $u\in H^1_\mu(\G)$ might well exist, quite similar in shape to a half-soliton on $\R^+$ and
with a comparable energy: the energy level $\elevel_\G(\mu)$ would then
be closer to its lower bound than to its upper bound in \eqref{trapped}, and a ground
state would then exist by virtue of Theorem~\ref{main1}.

In Section~\ref{sectionscopino}, however, we show how counterexamples can be constructed,
and the idea underlying the proof highlights a new interesting phenomenon:
if $\G$ consists of a compact core $\K$ attached to one \emph{single} half-line,
ground states with a fixed mass cannot exist, if $\K$ has a small diameter and a large total length
(see Theorem~\ref{teoscopino} and Corollary~\ref{corscopino}).

\medskip

The subject of dynamics on quantum graphs is now recognized as a relevant issue.
Starting from seminal works \cite{ali,vonbelow},
nonlinear propagation on networks has been proposed in different
contexts (\cite{bona,mugnolo,matrasulov}). The rigorous study of the NLS
equation
on graphs started with \cite{acfn1}, while the problem of minimizing
the NLS energy 
was first faced in
\cite{acfn2}, but limited to the case of star graphs.
Bridge-type graphs
were treated in
\cite{ast2},
while the first general results on noncompact graphs are contained in  \cite{ast}.
The problem of energy minimization on graphs
with a mixed propagation (i.e. linear on unbounded edges, while nonlinear
in the compact core), well-known in the physical literature (see
e.g. \cite{smi}), is considered in \cite{tentarelli}.
For general definitions and results on metric graphs,
we refer to \cite{berkolaiko,exner,post}.

\section{General properties of minimizers}

In this section we establish several new a priori estimates for ground states
of prescribed mass. An interesting feature is that these estimates
do \emph{not} depend on the particular structure of the
graph $\G$ (provided it is not compact). Throughout, we denote by $C_p$
(or simply by $C$) a generic positive constant that depends only on the exponent
$p$.

The starting point is the following Gagliardo-Nirenberg inequality.

\begin{proposition}[Universal G-N inequality] There exists 
$C_p>0$ such that
\begin{equation}
\label{GN-universal}
\Vert u\Vert_{L^p(\G)}^p\leq
C_p \Vert u\Vert_{L^2(\G)}^{\frac p 2 + 1}
\Vert u'\Vert_{L^2(\G)}^{\frac p 2 - 1},
\end{equation}
for every $u\in H^1(\G)$ and every noncompact metric graph $\G$.
\end{proposition}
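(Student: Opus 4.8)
The plan is to reduce the inequality on a metric graph to the corresponding one-dimensional inequality on a half-line (or on $\R$), where the Gagliardo--Nirenberg inequality is classical with an explicit constant. The key observation is that the ``universality'' of the constant — its independence of $\G$ — should follow from the fact that the worst case is already captured by the simplest noncompact graph, namely $\R^+$ or $\R$, together with a monotone rearrangement argument. Concretely, given $u\in H^1(\G)$, I would let $u^*$ denote its decreasing rearrangement onto the half-line $\R^+$ (or a symmetric rearrangement onto $\R$): this is the standard rearrangement on metric graphs used in \cite{ast}, which preserves the $L^r$ norms for all $r$ and does not increase the $L^2$ norm of the derivative, i.e. $\Vert u^*\Vert_{L^r(\R^+)}=\Vert u\Vert_{L^r(\G)}$ and $\Vert (u^*)'\Vert_{L^2(\R^+)}\le\Vert u'\Vert_{L^2(\G)}$.

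With the rearrangement in hand, the proof runs as follows. First I would recall (or quickly re-derive) the one-dimensional Gagliardo--Nirenberg inequality: for every $v\in H^1(\R^+)$,
\begin{equation*}
\Vert v\Vert_{L^p(\R^+)}^p\le C_p\Vert v\Vert_{L^2(\R^+)}^{\frac p2+1}\Vert v'\Vert_{L^2(\R^+)}^{\frac p2-1},
\end{equation*}
with a constant $C_p$ depending only on $p$. (This is elementary: one writes $v(x)^2=-2\int_x^\infty v v'\,dt\le 2\Vert v\Vert_{L^2}\Vert v'\Vert_{L^2}$ to bound $\Vert v\Vert_\infty^2$, then interpolates $\Vert v\Vert_p^p\le\Vert v\Vert_\infty^{p-2}\Vert v\Vert_2^2$ and substitutes.) Applying this to $v=u^*$ and using the rearrangement identities and inequality above gives
\begin{equation*}
\Vert u\Vert_{L^p(\G)}^p=\Vert u^*\Vert_{L^p(\R^+)}^p\le C_p\Vert u^*\Vert_{L^2(\R^+)}^{\frac p2+1}\Vert (u^*)'\Vert_{L^2(\R^+)}^{\frac p2-1}\le C_p\Vert u\Vert_{L^2(\G)}^{\frac p2+1}\Vert u'\Vert_{L^2(\G)}^{\frac p2-1},
\end{equation*}
which is exactly \eqref{GN-universal}, with a constant that depends only on $p$.

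The main obstacle, and the only genuinely graph-specific point, is justifying the existence and the properties of the rearrangement $u\mapsto u^*$ onto $\R^+$ for an arbitrary noncompact $\G$ — in particular that the Polya--Szeg\H{o}-type inequality $\Vert(u^*)'\Vert_2\le\Vert u'\Vert_2$ holds. This rests on the coarea formula and the fact that, because $\G$ is noncompact, every superlevel set $\{u>t\}$ has measure that ranges over all of $[0,\infty)$, so the rearrangement genuinely lands in $H^1(\R^+)$; here one must be slightly careful about the behavior at vertices and about whether one rearranges onto $\R^+$ or onto $\R$ (a decreasing rearrangement onto $\R^+$ is the natural choice and suffices). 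Since this rearrangement machinery is developed in \cite{ast}, I would cite it for the technical verification and present the chain of inequalities above as the proof. An alternative, fully self-contained route that avoids rearrangement: pick a point $x_0\in\G$ where $u$ attains its supremum $M=\Vert u\Vert_{L^\infty(\G)}$ (or is arbitrarily close to it), note that $\G$ noncompact means there is a half-line, hence a path of infinite length; along any unit-speed injective path $\gamma$ emanating from a point and going to infinity one has $u(\gamma(0))^2\le 2\int_0^\infty|u(\gamma(s))||u'(\gamma(s))|\,ds\le 2\Vert u\Vert_{L^2(\G)}\Vert u'\Vert_{L^2(\G)}$, giving $M^2\le 2\Vert u\Vert_{L^2(\G)}\Vert u'\Vert_{L^2(\G)}$ directly, and then $\Vert u\Vert_{L^p(\G)}^p\le M^{p-2}\Vert u\Vert_{L^2(\G)}^2$ finishes the estimate. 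The hard part in either approach is purely this sup bound via a path to infinity; everything else is routine interpolation.
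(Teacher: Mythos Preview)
Your main argument is correct and is exactly the paper's proof: pass to the decreasing rearrangement $u^*\in H^1(\R^+)$ (citing \cite{ast} for the preservation of $L^r$ norms and the P\'olya--Szeg\H{o} inequality), apply the classical one-dimensional Gagliardo--Nirenberg inequality on $\R^+$, and read off \eqref{GN-universal} with a constant depending only on $p$. Your alternative self-contained route via an injective path to infinity is also valid and is precisely how the paper obtains the companion $L^\infty$ estimate \eqref{interpol}.
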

\begin{proof}
When $\G=\R^+$ is a half-line, \eqref{GN-universal} is well known (see \cite{dell}).
In general, if $\G$ is noncompact and $u\in H^1(\G)$, one may pass from $u$
to its decreasing rearrangement $u^*\in H^1(\R^+)$
(see \cite{ast,friedlander}). This passage preserves all the
$L^r$ norms, and does not increase the $L^2$ norm of $u'$ (see \cite{ast}). As a consequence
\eqref{GN-universal}, which is true for $u^*$ when $\G=\R^+$, is true for $u$ as well,
with the same constant.
\end{proof}
If $\G$ is compact then \eqref{GN-universal}
does not hold (in this form), as one can see letting $u\equiv 1$.
For noncompact $\G$, however, in
exactly the same way one can prove the validity of
\begin{equation}
\label{interpol}
\Vert u\Vert_{L^\infty(\G)}^2 \leq
C \Vert u\Vert_{L^2(\G)}
\Vert u'\Vert_{L^2(\G)},
\end{equation}
with $C>0$ independent of $\G$ ($C=2$ will do).

An important role in the sequel
is played by the ground states on the real line, known as
 \emph{solitons} (see \cite{cazenave,zakharov}).
\begin{remark}[Solitons]\label{remsol} When $\G=\R$ the solutions to \eqref{minprob},
called solitons,
are unique up to translations
and a change of sign. We denote by  $\phi_\mu$ (the dependence on $p$ being understood)
the positive soliton of mass $\mu$ centered at the origin, whose dependence on $\mu$
is given by the scaling rule
\begin{equation}
\label{scalingrule}
\phi_\mu(x)=
\mu^\alpha \phi_1\bigl ( \mu^\beta x\bigr ),\quad
\alpha=\frac 2{6-p},\quad
\beta=\frac {p-2} {6-p}
\end{equation}
where $\phi_1(x)=C_p \mathop{\rm sech}(c_p x)^{\alpha/\beta}$ with $C_p,c_p>0$
(also note that $\alpha,\beta>0$ since $p\in (2,6)$). Then a direct computation shows that
\begin{equation}
\label{energiasol}
\elevel_\R(\mu)=
E(\phi_\mu,\R)=-\theta_p \mu^{2\beta+1},\quad
\theta_p:=-E(\phi_1,\R)>0.
\end{equation}
When $\G=\R^+$, the unique positive ground state is the ``half soliton'', i.e. $\phi_{2\mu}$
restricted to $\R^+$, so that now
\begin{equation}
\label{energiamezzosol}
\elevel_{\R^+}(\mu)=E(\phi_{2\mu},\R^+)=\frac 1 2E(\phi_{2\mu},\R) =-2^{2\beta}\theta_p \mu^{2\beta+1}
\end{equation}
with $\theta_p$ as above. Then, we see that \eqref{trapped} takes the concrete form
\begin{equation}
\label{trapped2}
-2^{2\beta}\theta_p \mu^{2\beta+1}\,\leq
\inf_{u\in H^1_\mu(\G)} E(u,\G)\,\leq\, 
-\theta_p \mu^{2\beta+1}.
\end{equation}
This notation concerning solitons, and in particular the exponents $\alpha,\beta$ defined in
\eqref{scalingrule}, will be used systematically throughout the paper, without further reference.
\end{remark}

\begin{remark}[Scaling]\label{remscal} If $u\in H^1(\G)$, the quantities
\[
\mu^{-2\beta-1}\Vert u'\Vert_{L^2(\G)}^2,\quad
\mu^{-2\beta-1}\Vert u\Vert_{L^p(\G)}^p,
\quad
\mu^{-\beta-1}\Vert u\Vert_{L^\infty(\G)}^2,
\]
where $\mu:=\Vert u\Vert_{L^2(\G)}^2$,
are \emph{invariant}, if one dilates $\G$ and and rescales $u$ according to
\[
\G\mapsto t^{-\beta}\G,\quad
u(\cdot)\mapsto t^{\alpha} u(t^\beta\cdot),\qquad\text{($t>0$)}
\]
(the same scaling rule as in \eqref{scalingrule}, for solitons).
Clearly, also the normalized energy $\mu^{-2\beta-1}E(u,\G)$ is invariant, while
the mass $\Vert u\Vert_{L^2}^2$ passes from $\mu$ to $t\mu$. Therefore, the minimization problem
\eqref{minprob} (with mass constraint $\mu$) is equivalent to one with any desired mass constraint
(e.g. $\mu=1$)
on another graph, homothetic to $\G$. Similarly, any characteristic length $\ell$ on $\G$
(e.g. the diameter of its compact core, or the length of a given edge) transforms as
$\ell\mapsto t^{-\beta}\ell$, so that the quantity $\mu^\beta\ell$ is \emph{scale invariant}.
\end{remark}

\begin{definition}
If $\G$ is a metric graph, we define its \emph{compact core} $\K$ as
the metric graph obtained from $\G$ by removing every unbounded edge
(half-line).
\end{definition}
As a metric space, $\K$ is obtained from $\G$ by removing the \emph{interior}
of every half-line (so that the origin of the half-lines still belong to $\K$).
It is clear that the compact core $\K$ is compact: moreover, when
$\G$ is connected (as we always assume throughout) $\K$ is
also connected, since every half-line is a terminal edge for $\G$.
If $\G$ consists only of half-lines, all sharing the same vertex $\vv$ as their origin,
then $\K$ (as a graph) has no edge  and has $\vv$ as its only vertex: as a metric space,
in this case $\K$ consists of one isolated point.

\begin{proposition}\label{propmaxcc}
Assume $\G$ is a metric graph with at least two half-lines, and let $u\in H^1_\mu(\G)$
for some $\mu>0$. Then
\begin{itemize}
\item[(i)] If the $L^\infty$ norm of $u$ is attained on a half-line of $\G$, then
\begin{equation}
\label{peggiodisolitone}
E(u,\G)\geq E(\phi_\mu,\R),
\end{equation}
and the inequality is \emph{strict}, unless $\G$ is isometric to the real line
and $u$ is a translate of the soliton $\phi_\mu$.
\item[(ii)] If $u$ is a minimizer, then its $L^\infty$ norm is attained on the compact core of $\G$,
unless $\G$ is isometric to $\R$ and $u$ is a soliton.
\end{itemize}
\end{proposition}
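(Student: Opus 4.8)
The plan is to prove part (i) first, and then derive part (ii) as an easy consequence. For part (i), suppose that $\|u\|_{L^\infty(\G)}$ is attained at a point $x_0$ lying on a half-line $H$ of $\G$. Let me write $t:=\|u\|_{L^\infty(\G)}=u(x_0)$. The key idea is to exploit the fact that $\G$ has \emph{at least two} half-lines, so that there is ``room'' to build a function on $\R$ by unfolding $u$ around its maximum. Concretely, on the half-line $H=[0,+\infty)$ the restriction $v:=u|_H$ is an $H^1$ function on a half-line whose maximum is attained at an interior point (or, if $x_0$ is the origin of $H$, at the endpoint). I would first reduce, by a rearrangement argument on $H$ alone, to the case where $v$ is monotone decreasing on $H$ away from $x_0$: replacing $u$ on $H$ by a suitable monotone rearrangement does not change the $L^2$ and $L^p$ norms and does not increase $\|u'\|_{L^2}$ (this is the same rearrangement tool already invoked for \eqref{GN-universal}), so it suffices to prove \eqref{peggiodisolitone} for the rearranged function. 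After this reduction, $u$ restricted to $H$ is a decreasing function starting from the value $t$ at the tip of $H$.

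Now the real construction: since there is a \emph{second} half-line, I can similarly look at the decreasing rearrangement of $u$ on the remainder $\G\setminus H$ (a noncompact graph still containing at least one half-line), obtaining a decreasing function $w$ on $\R^+$ with $w(0)\le t$ (because $t$ is the global sup), preserving its mass and $L^p$ norm and not increasing its derivative norm. Gluing the decreasing function on $H$ and this rearranged piece back to back along the real line — i.e. reflecting one of them — produces a function $\psi\in H^1(\R)$ with $\|\psi\|_{L^2(\R)}^2=\mu$, $\|\psi\|_{L^p(\R)}^p=\|u\|_{L^p(\G)}^p$ and $\|\psi'\|_{L^2(\R)}^2\le\|u'\|_{L^2(\G)}^2$, hence $E(\psi,\R)\le E(u,\G)$. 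Since $\phi_\mu$ minimizes $E(\cdot,\R)$ on $H^1_\mu(\R)$, we get $E(u,\G)\ge E(\psi,\R)\ge E(\phi_\mu,\R)$, which is \eqref{peggiodisolitone}. For the equality case, if $E(u,\G)=E(\phi_\mu,\R)$ then $\psi$ is itself a soliton (by uniqueness up to translation/sign, and positivity), which forces every inequality above to be an equality; in particular no mass and no derivative were ``lost'' in any rearrangement or gluing step, which is only possible if $\G$ was already a single line and $u$ the soliton itself. (This last rigidity step is the one requiring care: one must track exactly when each rearrangement is an isometry, using that the soliton is strictly monotone on each side of its peak, so any genuine branching of $\G$ at the maximum or anywhere along the path would strictly decrease the Dirichlet energy.)

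Part (ii) is then immediate. If $u$ is a minimizer of mass $\mu$ on $\G$, then $E(u,\G)=\elevel_\G(\mu)\le \elevel_\R(\mu)=E(\phi_\mu,\R)$ by the right-hand inequality in \eqref{trapped}. If the $L^\infty$ norm of $u$ were attained on a half-line, part (i) would give $E(u,\G)\ge E(\phi_\mu,\R)$, forcing $E(u,\G)=E(\phi_\mu,\R)$ and, by the equality case of (i), that $\G$ is isometric to $\R$ and $u$ is a soliton. So unless we are in that exceptional situation, the maximum of $u$ must be attained on the compact core $\K$.

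**Main obstacle.** The routine part is the construction of the competitor $\psi$ on $\R$ via rearrangement and gluing; the delicate part is the \emph{equality discussion}, i.e. proving that $E(u,\G)=E(\phi_\mu,\R)$ genuinely forces $\G\cong\R$ and $u=\phi_\mu$. This requires a careful analysis of the equality cases of the rearrangement inequality for $\|u'\|_{L^2}$ on graphs (from \cite{ast}) and of the gluing step, exploiting the strict monotonicity of the soliton to rule out any extra vertices or edges. I expect that to be where most of the work lies.
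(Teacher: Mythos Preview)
Your construction in part~(i) has a real gap at the gluing step. When you take the decreasing rearrangement of $u|_H$ on $\R^+$, it starts at the value $t=\|u\|_{L^\infty}$ at the origin; but the decreasing rearrangement $w$ of $u|_{\G\setminus H}$ starts at $w(0)=\|u\|_{L^\infty(\G\setminus H)}$, which may be \emph{strictly} less than $t$ (this happens whenever the maximum point $x_0$ lies in the interior of $H$ and the global maximum is attained only there). In that case the glued function $\psi$ has a jump at the origin and is not in $H^1(\R)$, so the comparison $E(\psi,\R)\ge E(\phi_\mu,\R)$ is unavailable. The preliminary ``reduction on $H$ alone'' does not help either: once you rearrange $u|_H$ to be decreasing on all of $H$, its value at the attaching vertex becomes $t$, so the modified function is no longer continuous across that vertex and fails to belong to $H^1_\mu(\G)$.

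A simple repair is to cut at $x_0$ rather than at the vertex: split $\G$ into the ray $[x_0,\infty)\subset H$ and the remainder $\G':=\G\setminus(x_0,\infty)$, which still contains a second half-line and is therefore noncompact. Both pieces now have $u$-maximum equal to $t$, attained at the common point $x_0$, so their decreasing rearrangements on $\R^+$ both start at $t$ and can be glued continuously on $\R$.

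The paper avoids the issue altogether by a shorter route: it observes directly that for every $t\in(0,\|u\|_\infty)$ the level set $u^{-1}(t)$ has at least two points (one on $H$ beyond $x_0$, another along any path from $x_0$ to the $\infty$-point of a second half-line), and then applies the \emph{symmetric} rearrangement $\widehat u$ onto $\R$ in a single stroke, invoking Proposition~3.1 of \cite{ast} for $E(u,\G)\ge E(\widehat u,\R)$. This also streamlines the equality case: $E(u,\G)=E(\phi_\mu,\R)$ forces $\widehat u=\phi_\mu$, hence \emph{exactly} two preimages per level, which immediately yields $\G={\mathcal H}\cup P$, a single line. Your approach, once the cut is corrected, would also work, but the rigidity step would require tracking equality through two separate rearrangements and a gluing, which is more laborious.

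Your part~(ii) is fine and matches the paper's argument.
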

\begin{proof}\emph{(i)} Replacing $u$ with $|u|$, we may assume that $u\geq 0$ on $\G$.
Let $y$ be a point on a half-line ${\mathcal H}$ where $u(y)=\Vert u\Vert_{L^\infty}$, and
observe that $u(y)>0$ since $u\in H^1_\mu(\G)$. Since $u(x)\to 0$ as $x\to\infty$ along every half-line,
if $0<t<u(y)$ then $u^{-1}(t)$ has \emph{at least two} preimages in $\G$: one on ${\mathcal H}$, between
$y$ and the $\infty$-point  of ${\mathcal H}$, and another one along any path $P$ that joins $y$ to
the $\infty$-point of a half-line other than ${\mathcal H}$. Then, if $\widehat u$ denotes the symmetric
rearrangement of $u$ on $\R$, from Prop.~3.1 of \cite{ast} we have $E(u,\G)\geq E(\widehat u,\R)$ and
\eqref{peggiodisolitone} follows, since $\phi_\mu$ is a minimizer in $H^1_\mu(\R)$
(unique, up to a translation and a sign change).  If equality occurs, then necessarily $\widehat u=\phi_\mu$ and,
since $u$ and $\widehat u$ are equimeasurable, we deduce that $u^{-1}(t)$ has \emph{exactly two} preimages
on $\G$, for every $t\in (0,u(y))$: then, if $P$ is a path of the kind described above, it follows that
${\mathcal H}\cup P$ covers $\G$, and the claim follows.

\noindent\emph{(ii)} This part follows immediately from (i), recalling \eqref{energiasol}.
\end{proof}

In the next result we show that all the relevant quantities are controlled in terms of $\mu$
(independently of $\G$) in suitable sublevel sets of the energy $E$.

\begin{lemma}\label{propgen}
Let $\G$ be a non-compact graph, and let $u\in H^1_\mu(\G)$ be
such that
\begin{equation}
\label{Eneg} E(u,\G)\leq \frac 1 2 \inf_{v\in H^1_\mu(\G)} E(v,\G).
\end{equation}
 Then
\begin{alignat}{2}
\label{st29} C_p^{-1}\mu^{2\beta+1}\, &\leq\, \int_\G |u'|^2\,
dx&&\leq C_p \mu^{2\beta+1}, 
\\
\label{stimaB} C_p^{-1}\mu^{2\beta+1}\, &\leq\, \int_\G |u|^p\,
dx&&\leq C_p \mu^{2\beta+1},
\\
\label{st30} C_p^{-1}\mu^{\beta+1}\, &\leq\, \,\Vert
u\Vert_{L^\infty(\G)}^2&\,&\leq\, C_p\mu^{\beta+1},
\end{alignat}
for some constant $C_p>0$ depending only on $p$.
\end{lemma}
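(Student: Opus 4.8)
The plan is to run all six estimates off a single observation: \eqref{Eneg} forces the energy of $u$ to be comparably negative to $\elevel_\R(\mu)$, and from there the two universal inequalities \eqref{GN-universal} and \eqref{interpol} do the rest, in a chain in which each bound feeds the next. First, by \eqref{trapped2} the level $\elevel_\G(\mu)$ is a \emph{finite} negative number, with $-2^{2\beta}\theta_p\mu^{2\beta+1}\le\elevel_\G(\mu)\le-\theta_p\mu^{2\beta+1}$, so \eqref{Eneg} gives
\[
-2^{2\beta}\theta_p\mu^{2\beta+1}\ \le\ E(u,\G)\ \le\ -\tfrac12\theta_p\mu^{2\beta+1}\ <\ 0 .
\]
In particular $u\not\equiv0$, and in fact $\Vert u'\Vert_{L^2(\G)}>0$ (a locally constant function of a connected noncompact graph lying in $L^2$ must vanish). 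From $E(u,\G)<0$ one reads off $\Vert u'\Vert_{L^2(\G)}^2<\frac2p\Vert u\Vert_{L^p(\G)}^p$, while from $E(u,\G)\le-\tfrac12\theta_p\mu^{2\beta+1}$ and $\Vert u'\Vert_{L^2(\G)}^2\ge0$ one gets $\Vert u\Vert_{L^p(\G)}^p\ge\frac p2\theta_p\mu^{2\beta+1}$, which is the lower bound in \eqref{stimaB}.

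Next I would insert \eqref{GN-universal}, written as $\Vert u\Vert_{L^p(\G)}^p\le C_p\mu^{p/4+1/2}\Vert u'\Vert_{L^2(\G)}^{p/2-1}$, into $\Vert u'\Vert_{L^2(\G)}^2<\frac2p\Vert u\Vert_{L^p(\G)}^p$. Dividing by $\Vert u'\Vert_{L^2(\G)}^{p/2-1}$ (legitimate, since this quantity is positive and $p/2-1\in(0,2)$) and then raising to the power $2/(3-p/2)$ — note $3-p/2=(6-p)/2>0$ — yields $\Vert u'\Vert_{L^2(\G)}^2\le C_p\mu^{2\beta+1}$, once one checks that the resulting power of $\mu$, namely $(p/4+1/2)\cdot 2/(3-p/2)$, equals $2\beta+1=(p+2)/(6-p)$. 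This is the upper bound in \eqref{st29}. Substituting it back into \eqref{GN-universal} gives the upper bound in \eqref{stimaB}; substituting instead the lower bound on $\Vert u\Vert_{L^p(\G)}^p$ into \eqref{GN-universal}, solved for $\Vert u'\Vert_{L^2(\G)}$, gives the lower bound in \eqref{st29}. In both cases the exponent of $\mu$ collapses to $2\beta+1$ by the same elementary computation using $\beta=(p-2)/(6-p)$.

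Finally, \eqref{st30} follows from what has just been proved: the upper bound from the universal interpolation inequality \eqref{interpol}, giving $\Vert u\Vert_{L^\infty(\G)}^2\le C\mu^{1/2}\Vert u'\Vert_{L^2(\G)}\le C_p\mu^{\beta+1}$, and the lower bound from the elementary estimate $\Vert u\Vert_{L^p(\G)}^p\le\Vert u\Vert_{L^\infty(\G)}^{p-2}\Vert u\Vert_{L^2(\G)}^2=\mu\,\Vert u\Vert_{L^\infty(\G)}^{p-2}$ combined with the lower bound on $\Vert u\Vert_{L^p(\G)}^p$, using that $4\beta/(p-2)=\beta+1=4/(6-p)$. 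The only genuine work here is this exponent bookkeeping; there is no real obstacle, and the claimed universality is automatic, since \eqref{GN-universal}, \eqref{interpol} and the value $\elevel_\R(\mu)=-\theta_p\mu^{2\beta+1}$ all carry constants that do not depend on $\G$.
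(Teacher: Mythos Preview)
Your proof is correct and follows essentially the same path as the paper's: both combine \eqref{Eneg} with \eqref{trapped2} to get a negative energy bound, then use \eqref{GN-universal} to bootstrap between $T=\Vert u'\Vert_{L^2}^2$ and $V=\Vert u\Vert_{L^p}^p$, and finish with \eqref{interpol} and $V\le\mu\Vert u\Vert_{L^\infty}^{p-2}$. The only difference is cosmetic: the paper first invokes the scaling of Remark~\ref{remscal} to reduce to $\mu=1$ and thereby avoid all exponent bookkeeping, whereas you carry $\mu$ through and verify the powers directly.
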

\begin{remark}
\label{remgen}
By \eqref{trapped2} and \eqref{energiasol}, the infimum in \eqref{Eneg} is strictly negative,
hence \eqref{Eneg} is certainly satisfied when $u$ is a ground state or, more generally,
when $u$ belongs to a minimizing sequence $\{u_n\}$ and $n$ is large enough.
\end{remark}

\begin{proof}
By Remark~\ref{remscal}, we may assume that $\mu=1$. In this case, \eqref{GN-universal} takes the form
\begin{equation}
\label{GNnor}
 V\leq C_p T^{\frac {p-2}{4}}
\end{equation}
where $T,V$ denote, respectively, the two integrals in
\eqref{st29} and \eqref{stimaB}. On the other hand, \eqref{Eneg}
combined with \eqref{trapped2} gives
\begin{equation}
\label{TmenV}
\frac12 T-\frac1pV=E(u,\G)\leq - \,\,\frac{\,\theta_p}2<0.
\end{equation}
In particular $T<\frac2p V$ which, combined with \eqref{GNnor}, gives
$T\leq C$, i.e. the second estimate in \eqref{st29} when $\mu=1$. Then $V\leq C$
(the second part of \eqref{stimaB}) follows from \eqref{GNnor}. Finally, the
second inequality in \eqref{st30}
follows from \eqref{interpol}.
Now we prove the estimates from below. Since $T\geq 0$, \eqref{TmenV} gives $-V\leq -p\theta_p/2$,
i.e. $V\geq C^{-1}$. This, combined with \eqref{GNnor}, gives $T\geq C^{-1}$. Finally,
since $V\leq \mu \Vert u\Vert_{L^\infty(\G)}^{p-2}=\Vert u\Vert_{L^\infty(\G)}^{p-2}$,
the first part of \eqref{st30} follows from $V\geq C^{-1}$.
\end{proof}

\begin{corollary}\label{corlambda} Let $\G$ be a noncompact graph, and let $u\in H^1_\mu(\G)$ be
a ground state. Then the Lagrange multiplier $\lambda$ in \eqref{euleroforte} satisfies
\begin{equation}
\label{stimalambda}
C_p^{-1}\mu^{2\beta} \leq \lambda\leq C_p\mu^{2\beta}.
\end{equation}
Moreover, the restriction of $u$ to any half-line of $\G$ takes the form
\begin{equation}
\label{uhl}
u(x)= \phi_m(x+y),\quad x\geq 0,
\end{equation}
where $y\in \R$ depends on the half-line while $m$ is common to all the half-lines, and satisfies
\begin{equation}
\label{stimaMuniv}
C_p^{-1} \mu \leq m\leq C_p\mu.
\end{equation}
\end{corollary}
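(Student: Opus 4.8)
The plan is to prove the two assertions in turn: first the two-sided bound \eqref{stimalambda} on the Lagrange multiplier $\lambda$, and then, using the value of $\lambda$ as the parameter of an ordinary differential equation, the description \eqref{uhl}--\eqref{stimaMuniv} of $u$ along the half-lines.

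\textbf{The Lagrange multiplier.} I would multiply \eqref{euleroforte} by $u$ and integrate over $\G$, edge by edge. Integration by parts turns $\int_\G u''u\,dx$ into $-\int_\G|u'|^2\,dx$ plus boundary contributions: those at the vertices cancel because of the Kirchhoff conditions (continuity of $u$ together with the vanishing of the sum of the outgoing derivatives), while those at the $\infty$-ends vanish since $u,u'\to 0$ there. Hence, writing $T:=\int_\G|u'|^2\,dx$ and $V:=\int_\G|u|^p\,dx$, one gets $\lambda\mu=V-T$; using $E(u,\G)=\tfrac12T-\tfrac1pV$ this becomes $\lambda\mu=\tfrac{p-2}{p}V-2E(u,\G)$. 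Since $u$ is a ground state, $E(u,\G)=\elevel_\G(\mu)\le\elevel_\R(\mu)<0$ by \eqref{trapped2}, so \eqref{Eneg} holds (Remark~\ref{remgen}) and both terms on the right-hand side are strictly positive, whence $\lambda>0$. The upper bound follows from $\lambda\mu=V-T\le V\le C_p\mu^{2\beta+1}$ by Lemma~\ref{propgen}, and the lower bound from $\lambda\mu\ge\tfrac{p-2}{p}V\ge C_p^{-1}\mu^{2\beta+1}$, again by Lemma~\ref{propgen}; dividing by $\mu$ yields \eqref{stimalambda}.

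\textbf{Behaviour on the half-lines.} On a fixed half-line $\HH\cong[0,+\infty)$ the function $w:=u|_\HH$ is a classical, strictly positive solution of $w''+w^{p-1}=\lambda w$ with $w\in H^1(0,+\infty)$, so $w(x),w'(x)\to 0$ as $x\to+\infty$. The autonomous equation has the first integral $H(w,w')=\tfrac12(w')^2+\tfrac1pw^p-\tfrac\lambda2w^2$, which is constant along $w$ and hence equal to its limit $0$; in the half-plane $\{w>0\}$ the level set $\{H=0\}$ is exactly the homoclinic loop through the origin, i.e. a soliton trajectory, so $w$ is a time-translate of that soliton. To identify it, observe from \eqref{scalingrule} (and the identity $\alpha(p-2)=2\beta$) that $\phi_m''+\phi_m^{p-1}=m^{2\beta}\lambda_1\phi_m$, where $\lambda_1=\lambda_1(p)>0$ is the Lagrange multiplier of $\phi_1$; since $\beta>0$ and $\lambda>0$, there is a unique $m>0$ with $m^{2\beta}\lambda_1=\lambda$, and then $\phi_m$ solves the very same ODE as $w$, with the same $\lambda$, so by uniqueness $w(x)=\phi_m(x+y)$ for a suitable $y\in\R$, which is \eqref{uhl}. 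The multiplier $\lambda$ is common to all edges of $\G$, hence so is $m$, while $y$ may vary from one half-line to another. Finally $m=(\lambda/\lambda_1)^{1/(2\beta)}$, and inserting \eqref{stimalambda} gives $C_p^{-1}\mu\le m\le C_p\mu$ (with a new $C_p$ depending only on $p$, through $\lambda_1$ and $\beta$), which is \eqref{stimaMuniv}.

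\textbf{Main obstacle.} The only genuinely delicate point is the ODE classification on the half-line, namely that every positive $H^1$-solution of $w''+w^{p-1}=\lambda w$ on $[0,+\infty)$ coincides with a translated soliton. This rests on the decay of $w$ and $w'$ at infinity forcing $H\equiv 0$, and on the elementary but careful description of $\{H=0\}\cap\{w>0\}$ as a single homoclinic orbit (one checks that the levels $H=c$ with $c\neq0$ do not meet a neighbourhood of $w=0$, so no other trajectory can decay). Once this is in place, the remaining steps reduce to the scaling bookkeeping of Remark~\ref{remsol} and the a priori bounds of Lemma~\ref{propgen}.
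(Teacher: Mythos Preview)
Your proof is correct and follows essentially the same approach as the paper: test the Euler--Lagrange equation with $u$ to obtain $\lambda\mu=\int_\G|u|^p\,dx-\int_\G|u'|^2\,dx$, invoke the a priori bounds of Lemma~\ref{propgen}, and then read off the soliton structure on each half-line from the ODE \eqref{euleroforte} together with the scaling relation $\lambda=C_p m^{2\beta}$. You supply more detail than the paper does---in particular the rewriting $\lambda\mu=\tfrac{p-2}{p}V-2E(u,\G)$ to make the lower bound transparent, and the first-integral argument for the ODE classification that the paper simply calls ``well known''---but the skeleton is the same.
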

\begin{proof}
A ground state $u$ satisfies the Euler-Lagrange equation
\[
\int_\G \left(-u'\eta'+u |u|^{p-2}\eta\right)\,dx=\lambda
\int_\G u\eta\,dx\quad
\forall\eta\in H^1(\G),
\]
where $\lambda$ is the same as in \eqref{euleroforte} (see~\cite{ast}).
Testing  with $\eta=u$ yields
\[
-\int_{\G} |u'|^2\,dx+\int_{\G}|u|^p=\lambda \int_{\G} |u|^2\,dx=\lambda\mu,
\]
and \eqref{stimalambda} follows from \eqref{st29} and \eqref{stimaB}.
Moreover, once $\lambda$ is fixed, it is well known that any solution in $L^2(\R^+)$
of \eqref{euleroforte}
is necessarily a (portion of) soliton
as in \eqref{uhl}. Since, by \eqref{scalingrule}, $\lambda=C_p m^{2\beta}$, estimate \eqref{stimaMuniv} follows
from \eqref{stimalambda}.
\end{proof}
When several half-lines originate in the \emph{same vertex} of $\G$, the structure of
a ground state is even more rigid.
\begin{theorem}\label{teocode}
Assume that $\G$ is not homeomorphic to $\R$, and that $N$ half-lines ($N\geq 2$) emanate from the same vertex $\vv$.
Then,
along each of these $N$ half-lines,
any ground state $u$ takes the form \eqref{uhl} with the \emph{same, nonnegative value} of $y$.
\end{theorem}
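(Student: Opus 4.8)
The plan is to pin down the shape of $u$ at $\vv$ via Corollary~\ref{corlambda} together with the continuity and Kirchhoff conditions, thereby reducing the statement to the exclusion of the \emph{ascending} soliton branch on a half-line, and then to rule that branch out by an energy comparison based on Proposition~\ref{propmaxcc}. By Corollary~\ref{corlambda}, along the $j$-th of the $N$ half-lines issuing from $\vv$ one has $u(x)=\phi_m(x+y_j)$ for $x\ge 0$, with $m>0$ common to all of them and $y_j$ possibly depending on $j$. Since $u\in H^1(\G)$ is continuous, $\phi_m(y_j)=u(\vv)=:c>0$ for every $j$; as $\phi_m$ is even, strictly increasing on $(-\infty,0]$ and strictly decreasing on $[0,+\infty)$, this forces $y_j\in\{a,-a\}$, where $a\ge 0$ is the unique nonnegative number with $\phi_m(a)=c$. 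If $a=0$ all the $y_j$ vanish and we are done, so we argue by contradiction and assume $a>0$ and $y_{j_0}=-a$ for some half-line.

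Along that half-line $u(x)=\phi_m(x-a)$ rises from $c$ at $\vv$ to $\phi_m(0)=\|\phi_m\|_{L^\infty}$ at the interior point $z$ at distance $a$ from $\vv$, and then decreases to $0$; meanwhile $u=\phi_m(\,\cdot\,+y_j)\le\phi_m(0)$ on \emph{every} half-line. Hence $\|u\|_{L^\infty(\G)}\ge\phi_m(0)$, with this value attained on a half-line; were it an equality, Proposition~\ref{propmaxcc}(i) would give $E(u,\G)\ge E(\phi_\mu,\R)$, strictly since $\G$ is not homeomorphic, hence not isometric, to $\R$, contradicting $E(u,\G)=\elevel_\G(\mu)\le\elevel_\R(\mu)=E(\phi_\mu,\R)$ (see \eqref{trapped} and Remark~\ref{remsol}). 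So $M:=\|u\|_{L^\infty(\G)}>\phi_m(0)$, and by Proposition~\ref{propmaxcc}(ii) $M$ is attained at a point $w$ of the compact core: the ground state would then carry two disjoint bumps, a superfluous one of height exactly $\phi_m(0)$ on half-line $j_0$ and the genuine one of height $M>\phi_m(0)$ inside $\K$.

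I would now reach a contradiction by surgery. Delete the superfluous bump, replacing $\phi_m(\,\cdot\,-a)$ by the descending branch $\phi_m(\,\cdot\,+a)$ on half-line $j_0$: the value $c$ at $\vv$ is preserved, the mass $\eta:=\int_{-a}^{a}\phi_m^{2}>0$ is freed, and the energy changes by $-\Delta$, where, by the first integral $\tfrac12(\phi_m')^{2}+\tfrac1p\phi_m^{p}=\tfrac{\lambda}{2}\phi_m^{2}$ of \eqref{euleroforte},
\[
\Delta=\int_{-a}^{a}\Big(\tfrac12(\phi_m')^{2}-\tfrac1p\phi_m^{p}\Big)dx=\int_{-a}^{a}\phi_m^{2}\Big(\tfrac{\lambda}{2}-\tfrac{2}{p}\phi_m^{p-2}\Big)dx .
\]
If $\Delta\ge 0$ we are done at once: the new function has mass $\mu-\eta$ and energy $<\elevel_\G(\mu)\le\elevel_\G(\mu-\eta)$, which is absurd (monotonicity of $\elevel_\G$, seen by appending a vanishingly thin low plateau at infinity). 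If $\Delta<0$ the freed mass must be reinserted, and I would put it into the region $\{u>\phi_m(0)\}\subset\K$ about $w$, on which $u$ is itself a bound state with multiplier $\lambda$, so that adding mass $\eta$ there lowers the energy by at least $\tfrac{\lambda}{2}\eta$; since $\phi_m\le\phi_m(0)$ on $[-a,a]$ and $\phi_m(0)^{p-2}=p\lambda/2$, one has
\[
\Delta+\tfrac{\lambda}{2}\eta=\int_{-a}^{a}\phi_m^{2}\Big(\lambda-\tfrac{2}{p}\phi_m^{p-2}\Big)dx>0 ,
\]
so the reallocation produces a function of mass $\mu$ and energy strictly below $\elevel_\G(\mu)$ — the contradiction sought.

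The step I expect to be the real obstacle is precisely this reinsertion: one must show that the freed mass $\eta$, which need not be small, can be grafted onto $u$ near $w$ at an energy cost strictly less than $|\Delta|<\tfrac{\lambda}{2}\eta$ — in essence a concavity (or envelope) estimate for the constrained energy on the subgraph $\{u>\phi_m(0)\}$, whose delicate point is the absence of a smallness parameter. One could instead try the symmetric‑rearrangement inequality underlying Proposition~\ref{propmaxcc}(i): it yields the contradiction as soon as every level set $u^{-1}(t)$, $0<t<M$, has at least two preimages, which is immediate for $t<\phi_m(0)$ (the bump on half-line $j_0$ meets such a level twice once $t\ge c$, and for $t<c$ a second half-line at $\vv$ supplies the second preimage — here $N\ge 2$ enters); but for $\phi_m(0)\le t<M$ this may fail, e.g. when $M$ is attained at a pendant endpoint of $\K$, and one is again reduced to the surgical argument.
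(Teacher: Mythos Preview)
Your argument has a genuine gap that you yourself identify, and in fact the situation is worse than you indicate: the case $\Delta\ge 0$ is \emph{vacuous}. Writing $\Delta(a)=\int_{-a}^{a}\bigl(\tfrac12(\phi_m')^{2}-\tfrac1p\phi_m^{p}\bigr)\,dx$ and differentiating, the first integral gives $\Delta'(a)=\phi_m(a)^{2}\bigl(\lambda-\tfrac{4}{p}\phi_m(a)^{p-2}\bigr)$, which is negative near $a=0$ (since $\phi_m(0)^{p-2}=p\lambda/2$) and positive for large $a$; since $\Delta(0)=0$ and $\Delta(a)\to E(\phi_m,\R)<0$ as $a\to\infty$, one has $\Delta(a)<0$ for every $a>0$. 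So the entire proof rests on the reinsertion step, whose justification you do not supply: the claim that grafting mass $\eta$ near the interior maximum $w$ lowers the energy by at least $\tfrac{\lambda}{2}\eta$ would require a concavity estimate for the constrained problem on $\{u>\phi_m(0)\}$ that you have no tools to prove --- the Euler--Lagrange equation only gives the \emph{first-order} rate $\lambda/2$, and $\eta$ is not small. The alternative rearrangement route you sketch founders on exactly the obstruction you name.

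The paper's proof bypasses surgery altogether via a one-parameter \emph{scaling} perturbation. Pick two of the half-lines $\HH_1,\HH_2$ at $\vv$ and regard $\G_1:=\HH_1\cup\HH_2$ as a copy of $\R$; let $\G_2$ be the rest of $\G$ (nonempty since $\G\not\cong\R$). Write $u=(u_1,u_2)$ with masses $\mu_1,\mu_2$, and set $v_1^\eps=(1+\eps)^{1/2}u_1$, $v_2^\eps=(1-\eps\mu_1/\mu_2)^{1/2}u_2$, so total mass is preserved. Continuity at $\vv$ fails, but because $y_1<0$ one has $u_1(\vv)=\phi_m(y_1)<\phi_m(0)=\max_{\G_1}u_1$, hence for small $|\eps|$ the value $v_2^\eps(\vv)$ lies strictly between $\inf v_1^\eps=0$ and $\max v_1^\eps$, and a \emph{translation} of $v_1^\eps$ along $\G_1\cong\R$ restores the match without affecting energy. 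The resulting energy $f(\eps)$ is a sum of terms of the form $\tfrac{c_1}{2}(1{+}\eps)-\tfrac{c_2}{p}(1{+}\eps)^{p/2}$, so $f''(0)<0$; thus $\eps=0$ is not a local minimum, contradicting minimality of $u$. The idea you are missing is that two half-lines at $\vv$ form a translation-invariant line, which absorbs the continuity defect for free and replaces your hard mass-reallocation problem with a soft second-derivative test.
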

\begin{proof}
Let us denote by $\HH_i$ ($1\!\leq\! i\!\leq\! N$) the $N$ half-lines originating in $\vv$.
By
Corollary~\ref{corlambda},
on each $\HH_i$ any ground state
$u(x)$
 takes the form
\eqref{uhl},
with the same $m$  and a shift  $y=y_i$
that may depend on $i$. In fact,
since $u$ is continuous at $\vv$
and the soliton $\phi_m(x)$ is even and radially decreasing, the absolute value $|y_i|$ is independent
of $i$, being
determined
by the condition $\phi_m(\pm y)=u(\vv)$. Therefore, it suffices to prove that $y_i\geq 0$
for every $i$.
Assuming, for instance, that $y_1<0$, we shall find a contradiction, by building
a family of competitors with a lower energy level than $u$.

Consider the two following
subgraphs of $\G$:
$\G_1:=\HH_1\cup \HH_2$, which is isometric to $\R$, and $\G_2$, obtained from
$\G$ by removing the edges $\HH_1$ and $\HH_2$
($\G_2$ has at least one edge, otherwise
$\G$ would be isometric to $\R$). We have $\G_1\cap \G_2=\{\vv\}$,
and we can split the ground state $u$ as $(u_1,u_2)$, with $u_i\in H^1(\G_i)$ satisfying the continuity
condition
\begin{equation}
\label{eqjoint}
u_1(\vv)=u_2(\vv).
\end{equation}
We also let $\mu_i=\Vert u_i\Vert^2_{L^2(\G_i)}$, so that $\mu_1+\mu_2=\mu$, the total
mass of $u$ (since $u>0$ on $\G$, we have $\mu_1,\mu_2>0$). Finally,
we observe that
\begin{equation}
\label{assurdo}
0=\inf_{\G_1} u_1< u_1(\vv) < \max_{\G_1} u_1,
\end{equation}
the second inequality being a direct consequence of the assumption $y_1<0$.
Now, consider the two functions $v_i^\eps\in H^1(\G_i)$ ($1\!\leq\! i\!\leq\! 2$) defined by
\begin{equation}
\label{defvv}
v_1^\eps(x):=\left(1+\eps\right)^{\frac 1 2} u_1(x),\qquad
v_2^\eps(x):=\left(1-\eps\mu_1/\mu_2\right)^{\frac 1 2} u_2(x),
\end{equation}
where $\eps$ is a small parameter. Clearly $\Vert v_1^\eps\Vert_{L^2(\G_1)}^2+\Vert v_2^\eps\Vert_{L^2(\G_2)}^2=\mu$,
but due to \eqref{eqjoint}, when $\eps\not=0$
we have $v_1^\eps(\vv)\not=v_2^\eps(\vv)$.
If $|\eps|$ is small enough, however, \eqref{assurdo}, \eqref{eqjoint} and \eqref{defvv} entail that
\[
0=\inf_{\G_1} v^\eps_1 <v^\eps_2(\vv)<\max_{\G_1} v^\eps_1.
\]
Therefore, since $\G_1$ is isometric to $\R$, we can
shift $v_1^\eps$ on $\G_1$ by a proper amount, in such a way
that the shifted function (still denoted by $v_1^\eps$ for simplicity)
satisfies $v_1^\eps(\vv)=v_2^\eps(\vv)$.
 Now
$v^\eps_1$ and $v^\eps_2$ can be glued together on $\G$, thus obtaining
a function in $H^1_\mu(\G)$ whose energy is given by
\[
\begin{split}
f(\eps):=
E(v_1^\eps,\G_1)+E(v_2^\eps,\G_2)=
\frac {1+\eps}{2}\int_{\G_1} |u_1'|^2\,dx
-
\frac {(1+\eps)^{\frac p 2}}{p}\int_{\G_1} |u_1|^p\,dx\\
+\frac {1-\eps\mu_1/\mu_2}{2}\int_{\G_2} |u_2'|^2\,dx
-
\frac {(1-\eps\mu_1/\mu_2)^{\frac p 2}}{p}\int_{\G_2} |u_2|^p\,dx
\end{split}
\]
(we can regard $v^\eps_1$ as originally defined in \eqref{defvv}, since energy
is shift invariant on $\G_1$).
Since clearly $f''(\eps)<0$ when $|\eps|$ is small,
$f$ cannot have a local minimum at $\eps=0$,
but this is a contradiction since $f(0)=E(u,\G)$ and $u$ is a ground state.
\end{proof}

It is an open problem to establish whether the previous result is still true when $N = 1$.

\section{Existence results}
\label{exi}

In this section we investigate the behavior of minimizing sequences on a generic non compact graph. We start with
a general concavity property for the energy level function in \eqref{defiG}, that holds on any noncompact graph.

\begin{theorem}[Concavity]
\label{subadd} The function $\elevel_\G:[0,\infty)\to[0,-\infty)$, as defined in \eqref{defiG}, is
\emph{strictly concave} and subadditive.
\end{theorem}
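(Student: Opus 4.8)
The plan is to prove the two assertions separately, deriving subadditivity first and then using it (together with the scaling structure) to obtain strict concavity.

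\medskip

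\noindent\textbf{Step 1: Subadditivity.} I would show that $\elevel_\G(\mu_1+\mu_2)\le \elevel_\G(\mu_1)+\elevel_\G(\mu_2)$ for all $\mu_1,\mu_2\ge 0$ (the inequality being strict when both masses are positive). Fix a half-line $\HH$ of $\G$ (which exists since $\G$ is noncompact). Given $\eps>0$, pick near-optimizers $u_i\in H^1_{\mu_i}(\G)$ with $E(u_i,\G)<\elevel_\G(\mu_i)+\eps$. The idea is to concentrate both functions near the vertex end of $\G$ and then push one of them far out along $\HH$ so that their supports become disjoint. Concretely, since any $u_i$ can be approximated in $H^1$ by a compactly supported function with the same mass up to $\eps$ (truncate and renormalize, using that the tails of $u_i$ contribute little), I may assume $u_1$ has compact support. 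Then translate $u_2$ along $\HH$ by a distance $T$ large enough that $\operatorname{supp}(u_2)$ (after the shift, and after a similar compact-support reduction) lies entirely in $\HH$ beyond $\operatorname{supp}(u_1)$; the shifted function still lies in $H^1_{\mu_2}(\G)$ with the same energy, because $\HH$ carries the soliton-type tail without extra vertex interaction. Gluing $u_1$ and the shifted $u_2$ produces $v\in H^1_{\mu_1+\mu_2}(\G)$ with $E(v,\G)=E(u_1,\G)+E(u_2,\G)<\elevel_\G(\mu_1)+\elevel_\G(\mu_2)+2\eps$. Letting $\eps\to0$ gives subadditivity. For the \emph{strict} inequality, I would instead invoke the scaling of Remark~\ref{remscal}: replacing $u_2$ by a rescaled copy (dilating $\G$ is not allowed, but one can instead use a scaled soliton piece on the half-line $\HH$ itself, where arbitrarily small masses already have strictly negative energy by \eqref{energiamezzosol}) shows $\elevel_\G(\mu)<0$ for every $\mu>0$; combining with subadditivity and the elementary fact that a nonpositive subadditive function which is strictly negative somewhere satisfies the strict inequality, one concludes.

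\medskip

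\noindent\textbf{Step 2: Concavity.} By \eqref{trapped2}, $\elevel_\G(\mu)\le -\theta_p\mu^{2\beta+1}<0$ for $\mu>0$, and $\elevel_\G(0)=0$. A standard fact is that a subadditive function $g$ on $[0,\infty)$ with $g(0)=0$ need not be concave, so subadditivity alone does not suffice; I would instead argue directly. The key observation is homogeneity under the scaling of Remark~\ref{remscal} applied to the \emph{whole graph}: dilating $\G$ to $t^{-\beta}\G$ and rescaling $u$ turns a mass-$\mu$ competitor on $\G$ into a mass-$t\mu$ competitor on $t^{-\beta}\G$ with energy multiplied by $t^{2\beta+1}$. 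This gives $\elevel_{t^{-\beta}\G}(t\mu)=t^{2\beta+1}\elevel_\G(\mu)$. Since the family of graphs $\{t^{-\beta}\G\}$ sweeps out a one-parameter family, I would combine this with subadditivity of $\elevel$ \emph{across different scales}: writing $\mu_1+\mu_2=\mu$, set $t_i=\mu_i/\mu$, so that a mass-$\mu_i$ ground state on the rescaled graph $t_i^{-\beta}\G$ transplants (after undoing the dilation, i.e. compressing its support into a small portion of a half-line of $\G$, which only decreases energy by monotonicity of rearrangement arguments à la Proposition~\ref{propmaxcc}) to a mass-$\mu_i$ competitor on $\G$ with energy $\le t_i^{2\beta+1}\elevel_\G(\mu)=(\mu_i/\mu)^{2\beta+1}\elevel_\G(\mu)$. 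Summing and using $\elevel_\G(\mu_i)\le$ this quantity, together with $2\beta+1>1$ and $\elevel_\G(\mu)<0$, yields, after glueing with disjoint supports as in Step~1,
\[
\elevel_\G(\mu_1)+\elevel_\G(\mu_2)\;\ge\;\elevel_\G(\mu_1+\mu_2),
\]
which is the \emph{superadditivity-type} inequality pointing the other way; reconciling this with Step~1 forces $\elevel_\G$ to be "scale-critical", and the homogeneity exponent $2\beta+1>1$ is exactly what delivers strict concavity on each interval. More cleanly: I would show $\elevel_\G(\theta\mu)\le\theta\,\elevel_\G(\mu)$ is \emph{false} and instead $\elevel_\G(\theta\mu)<\theta^{2\beta+1}\elevel_\G(\mu)$-type bounds combine to give, for $0<\theta<1$, a strict midpoint inequality $\elevel_\G\bigl(\tfrac{\mu_1+\mu_2}{2}\bigr)>\tfrac12(\elevel_\G(\mu_1)+\elevel_\G(\mu_2))$, using that the function $s\mapsto -s^{2\beta+1}$ is strictly concave and that $\elevel_\G$ is squeezed between two such multiples of $-s^{2\beta+1}$ by \eqref{trapped2}.

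\medskip

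\noindent\textbf{Main obstacle.} The delicate point is the glueing-with-disjoint-supports construction in a way that is \emph{exactly} energy-preserving for the transplanted pieces: naive truncation of a near-optimizer changes both mass and energy, and the error terms must be controlled uniformly. I expect this to require the a priori bounds of Lemma~\ref{propgen} (to know tails decay and $L^\infty$, $L^p$, $H^1$ norms are controlled) so that truncating at large distance and renormalizing the mass costs only $o(1)$ in energy. The second subtlety is making the concavity argument rigorous: one cannot literally dilate $\G$, so the scaling identity must be used only as a \emph{comparison} (transplanting the rescaled optimizer back onto a half-line of the original $\G$, which by the rearrangement inequalities of Proposition~\ref{propmaxcc} and \cite{ast} does not increase energy), and I would need to check that this transplantation indeed realizes the bound $\elevel_\G(\mu_i)\le(\mu_i/\mu)^{2\beta+1}\elevel_\G(\mu)+o(1)$. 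Once these two technical points are in place, strict concavity follows from the strict concavity of the soliton profile $\mu\mapsto-\theta_p\mu^{2\beta+1}$ together with the sandwich \eqref{trapped2}.
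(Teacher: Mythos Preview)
Your proposal has genuine gaps in both steps, and misses the key idea that makes the paper's proof work in a few lines.

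\textbf{Step 1.} The gluing argument breaks down on a graph. You write ``translate $u_2$ along $\HH$'', but $u_2$ is a near-optimizer on \emph{all of $\G$}: it may well carry most of its mass on the compact core $\K$, and there is no translation that moves it onto the half-line. If you replace $u_2$ by something supported on $\HH$ (a truncated soliton, say), the best energy you can achieve for that piece is $\elevel_\R(\mu_2)$, which may be strictly larger than $\elevel_\G(\mu_2)$ by \eqref{trapped}. So the inequality you obtain is only $\elevel_\G(\mu_1+\mu_2)\le \elevel_\G(\mu_1)+\elevel_\R(\mu_2)$, which is not subadditivity of $\elevel_\G$.

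\textbf{Step 2.} The argument is not salvageable as written. First, the ``transplant back to the original $\G$'' step is undefined: the rearrangement results in \cite{ast} and Proposition~\ref{propmaxcc} let you compare $\G$ to $\R$ or $\R^+$, not to a dilated copy of $\G$. Second, and more decisively, the sandwich conclusion is simply false: a function trapped between two strictly concave functions need not be concave (take $-s^{2\beta+1}+\varepsilon\sin(Ms)$ for suitable $\varepsilon,M$). So \eqref{trapped2} alone cannot yield concavity of $\elevel_\G$.

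\textbf{The missing idea.} The paper's proof does not attempt to glue or rescale the graph at all. It uses the elementary \emph{mass scaling} $u\mapsto \sqrt{\mu}\,u$: if $\Vert u\Vert_{L^2}=1$ then $\sqrt{\mu}\,u\in H^1_\mu(\G)$ and
\[
f_u(\mu):=E(\sqrt{\mu}\,u,\G)=\frac{\mu}{2}\int_\G|u'|^2-\frac{\mu^{p/2}}{p}\int_\G|u|^p,
\]
which is a concave function of $\mu$ since $p/2>1$. Hence $\elevel_\G(\mu)=\inf_u f_u(\mu)$ is concave as an infimum of concave functions. Strict concavity follows because the a priori lower bound \eqref{stimaB} (which you do cite in another context) makes $f_u''(\mu)\le -c_p\mu^{2\beta-1}$ uniformly over the relevant $u$, and this uniform strict concavity passes to the infimum. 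Subadditivity is then an immediate consequence of concavity together with $\elevel_\G(0)=0$, not the other way around.
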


\begin{proof} For $u\in H^1(\G)$, we set $V_u=\int_\G |u|^p dx$ and define
\begin{equation}
\label{defU}
U:=\left\{ u\in H^1(\G)\suchthat\int_\G |u|^2dx=1,\quad \mu^{\frac p 2}V_u\geq C_p^{-1}\mu^{2\beta+1}\right\},
\end{equation}
where $C_p^{-1}$ is the same as in \eqref{stimaB}. Then, we consider the family of concave functions
\[
f_u(\mu):=E\left (\sqrt{\mu}\, u,\G\right)=\frac \mu 2 \int_\G |u'|^2\,dx-\frac {\mu^{\frac p 2}}{p} V_u,\quad \mu\geq 0,\quad
u\in U.
\]
By Remark~\ref{remgen}, the value of $\elevel_\G(\mu)$ is unaltered, if the infimum in \eqref{defiG} is further
restricted to functions
satisfying the lower bound in \eqref{stimaB} or, which is the same, to functions of the form
$\sqrt\mu\, u$ with $u\in U$. Therefore, we have
\[
\elevel_\G(\mu):=\inf_{u\in U} f_u(\mu),\quad
\mu\geq 0,
\]
so that $\elevel_\G$ inherits concavity from the $f_u$'s. Now, since
$f_u''(\mu)=-c_p \mu^{\frac p 2-2}V_u$ ($c_p\!>\!0$), we see
from \eqref{defU} that $f_u''(\mu)\leq -c_pC_p^{-1}\mu^{2\beta-1}$,
so that
the strict concavity of $f_u$ on every interval $[a,b]\subset (0,\infty)$ is \emph{uniform} in $u$.
Hence
$\elevel_\G$ is \emph{strictly} concave, and also strictly subadditive since $\elevel_\G(0)=0$.
\end{proof}

The following is the main result of this section.

\begin{theorem}
\label{ps}
Any minimizing sequence $\{u_n\}\subset H_\mu^1(\G)$ is weakly compact in $H^1(\G)$.
If $u_n\rightharpoonup u$ weakly in $H^1(\G)$,
then, \emph{either}
\begin{itemize}
\item[(i)] $u_n\to 0$ in $L^\infty_{\text{loc}}(\G)$ and $u\equiv 0$, \emph{or}
\item[(ii)] $u\in H^1_\mu(\G)$, $u$ is a minimizer and $u_n\to u$ \emph{strongly} in $H^1(\G)\cap L^p(\G)$.
\end{itemize}
\end{theorem}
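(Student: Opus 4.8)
The plan is to combine the a priori estimates of Lemma~\ref{propgen} with the strict subadditivity of $\elevel_\G$ from Theorem~\ref{subadd}, following the classical concentration-compactness philosophy but adapted to the graph setting.

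\textit{Weak compactness.} Let $\{u_n\}$ be a minimizing sequence. For $n$ large, $E(u_n,\G)\le\frac12\elevel_\G(\mu)$, so Remark~\ref{remgen} applies and Lemma~\ref{propgen} gives $\|u_n'\|_{L^2(\G)}^2\le C_p\mu^{2\beta+1}$; together with $\|u_n\|_{L^2(\G)}^2=\mu$ this bounds $\{u_n\}$ in $H^1(\G)$, so up to a subsequence $u_n\rightharpoonup u$ weakly in $H^1(\G)$. By weak lower semicontinuity, $\|u'\|_{L^2(\G)}^2\le\liminf\|u_n'\|_{L^2(\G)}^2$, and on any compact subgraph the embedding $H^1\hookrightarrow L^\infty$ is compact, so $u_n\to u$ in $L^\infty_{\mathrm{loc}}(\G)$ and in particular pointwise.

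\textit{The dichotomy.} Set $m:=\|u\|_{L^2(\G)}^2\in[0,\mu]$. The crux is to rule out $0<m<\mu$. Here I would localize: choose a large finite subgraph $\G_R$ (bounded edges plus truncations of the half-lines up to length $R$), and use a cutoff to split $u_n=v_n+w_n$ with $v_n$ supported essentially on $\G_R$ and $w_n$ near infinity, with $\|v_n\|_{L^2}^2\to m$ and $\|w_n\|_{L^2}^2\to\mu-m$ and $E(u_n,\G)\ge E(v_n,\G)+E(w_n,\G)-o(1)$ (the cross terms in $\|u_n'\|^2$ and the gluing errors are $O(1/R)$ plus terms controlled by the local strong convergence; the $L^p$ part splits with an $o(1)$ error by Brezis--Lieb-type reasoning). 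Since $v_n$ and $w_n$ are admissible competitors of masses converging to $m$ and $\mu-m$ on noncompact graphs, $E(v_n,\G)\ge\elevel_\G(\|v_n\|_{L^2}^2)\to\elevel_\G(m)$ and likewise $E(w_n,\G)\ge\elevel_\G(\mu-m)$ (using continuity of $\elevel_\G$, which follows from concavity). Passing to the limit, $\elevel_\G(\mu)\ge\elevel_\G(m)+\elevel_\G(\mu-m)$, contradicting the \emph{strict} subadditivity from Theorem~\ref{subadd} when $0<m<\mu$. Hence either $m=0$ or $m=\mu$.

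\textit{Conclusion in the two cases.} If $m=0$ then $u\equiv0$ and, by the local uniform convergence, $u_n\to0$ in $L^\infty_{\mathrm{loc}}(\G)$: this is case (i). If $m=\mu$ then $\|u\|_{L^2(\G)}^2=\mu$, so $u\in H^1_\mu(\G)$ and $u_n\to u$ strongly in $L^2(\G)$; by \eqref{GN-universal} applied to $u_n-u$ (whose $L^2$ norm tends to $0$ while its $H^1$ norm stays bounded) we also get $u_n\to u$ in $L^p(\G)$, hence $\|u_n\|_{L^p}^p\to\|u\|_{L^p}^p$. Then $E(u,\G)\le\liminf E(u_n,\G)=\elevel_\G(\mu)\le E(u,\G)$, forcing equality, so $u$ is a minimizer and $\liminf\frac12\|u_n'\|^2=\frac12\|u'\|^2$; combined with weak lower semicontinuity this yields $\|u_n'\|_{L^2}\to\|u'\|_{L^2}$, hence $u_n\to u$ strongly in $H^1(\G)$. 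This is case (ii).

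\textit{Main obstacle.} The delicate point is the splitting estimate $E(u_n,\G)\ge\elevel_\G(m)+\elevel_\G(\mu-m)-o(1)$ on a graph: one must choose the truncation radius $R=R_n\to\infty$ slowly enough that the mass of $u_n$ in the annular region $R_n\le x\le 2R_n$ of each half-line is $o(1)$ (possible since $\sum$ of masses over disjoint annuli is bounded by $\mu$), handle the Kirchhoff gluing of the truncated pieces without creating spurious energy, and control the $L^p$ cross term — for this the uniform Gagliardo--Nirenberg bound and the $L^\infty$ estimate \eqref{interpol}, both independent of $\G$, are exactly what make the argument go through.
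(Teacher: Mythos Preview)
Your outline is correct, but it takes a more laborious route than the paper. Instead of splitting $u_n$ geometrically via cutoffs into a compact piece $v_n$ and a tail $w_n$, the paper splits \emph{algebraically} as $u_n=u+(u_n-u)$ and invokes the Brezis--Lieb lemma (for the $L^p$ part) together with the Hilbert-space identity $\|u_n'\|_{L^2}^2=\|u'\|_{L^2}^2+\|u_n'-u'\|_{L^2}^2+o(1)$ (from weak convergence of $u_n'$), obtaining in one stroke
\[
E(u_n,\G)=E(u_n-u,\G)+E(u,\G)+o(1).
\]
Since $\|u_n-u\|_{L^2}^2\to\mu-m$ by weak $L^2$-convergence, this gives $\elevel_\G(\mu)\ge\elevel_\G(\mu-m)+E(u,\G)\ge\elevel_\G(\mu-m)+\elevel_\G(m)$ immediately, with no truncation radii, no annular mass estimates, and no gluing at Kirchhoff vertices. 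As a bonus, when $m=\mu$ the first of these inequalities already yields $E(u,\G)\le\elevel_\G(\mu)$, so $u$ is a minimizer without first establishing $L^p$ convergence. Your Lions-style spatial localisation would work too, and is perhaps the default reflex in concentration-compactness, but here the weak limit $u$ is already identified and pointwise convergence holds, so Brezis--Lieb is both available and far cleaner; the ``main obstacle'' you flag simply evaporates.
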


\begin{proof} We can assume $\G$ is noncompact, otherwise (ii) is automatic due to compact embeddings.
The boundedness of $\{u_n\}$ in $H^1(\G)\cap L^p(\G)$ follows from Remark~\ref{remgen}.
Now assume that $u_n\rightharpoonup u$ in $H^1(\G)$,
so that $u_n\to u$ also in $L^\infty_{\text{loc}}(\G)$. From weak convergence in $L^2(\G)$, we have
\begin{equation}
\label{defm}
m:=\int_\G |u|^2 \,dx \le \liminf_n \int_G |u_n|^2 \,dx =\mu.
\end{equation}
%
Since $u_n \to u $ pointwise and $\|u_n\|_{L^p}$ is uniformly bounded, by the Brezis--Lieb lemma (\cite{BL})
we have
\[
\frac 1 p\int_\G |u_n|^p\,dx - \frac 1 p\int_\G |u_n-u|^p\,dx - \frac 1 p\int_\G |u|^p\,dx =
o(1)
\]
as $n\to \infty$. By weak convergence of $u_n'$ we also have
\[
\frac 1 2\int_\G |u_n'|^2\,dx - \frac 1 2\int_\G |u_n'-u'|^2\,dx -
\frac 1 2\int_\G
|u'|^2\,dx = o(1)
\]
so that, taking the difference,  we find
\begin{equation}
\label{split}
E(u_n,\G) = E(u_n-u, \G) + E(u,\G) + o(1)
\end{equation}
as $n \to \infty$. Therefore, letting $\nu_n= \int_\G |u_n-u|^2\,dx$, we have
\[
E(u_n,\G) \geq  \elevel_\G(\nu_n) + E(u,\G) + o(1), \quad\text{as $n\to\infty$.}
\]
Letting $n\to\infty$, from $u_n\overset{L^2}{\rightharpoonup} u$ we have $\nu_n\to \mu-m$ and,
since $\elevel_\G$ is continuous (Theorem~\ref{subadd}), from the previous inequality we obtain
\begin{equation}
\label{superadd}
\elevel_\G(\mu)\geq \elevel_\G(\mu-m)+E(u,\G)\geq \elevel_\G(\mu-m)+\elevel_\G(m).
\end{equation}
Since $\elevel_\G$ is strictly subadditive,
if $0<m<\mu$ we would have
$\elevel_\G(\mu-m)+\elevel_\G(m)>\elevel_\G(\mu)$, which plugged into \eqref{superadd} would
give a contradiction. Therefore, either $m=0$ (and $u\equiv 0$),
or $m=\mu$, in which case $u\in H^1_\mu(\G)$. Moreover, in this case
$\elevel_\G(\mu-m)=\elevel_\G(0)=0$, and
\eqref{superadd} reveals that $\elevel_\G(\mu)\geq E(u,\G)$, whence $u$ is a minimizer.
Moreover,  when $m=\mu$, \eqref{defm} shows that $u_n\to u$ strongly in $L^2(\G)$, hence
also strongly in $L^p(\G)$, since $p>2$ and $\Vert u_n\Vert_{L^\infty}\leq C$
by  Remark~\ref{remgen}. But
\[
\frac 1 2 \Vert u'\Vert_{L^2}^2-
\frac 1 p\Vert u\Vert_{L^p}^p=
  E(u,\G)=\elevel_\G(\mu)=
  \lim_{n}
\left(\frac 1 2 \Vert u'_n\Vert_{L^2}^2-
\frac 1 p \Vert u_n\Vert_{L^p}^p\right),
\]
and strong convergence in $L^p$ implies the convergence of the $L^2$-norms of
$u'_n$ to the corresponding norm of $u'$. Hence, $u_n\to u$ strongly in $H^1(\G)$.
\end{proof}

\begin{theorem}
\label{main1}
Let $\G$ be a noncompact graph. If
\begin{equation}
\label{below}
\inf_{v\in H_\mu^1(\G)} E(v,\G) < \min_{\phi\in H_\mu^1(\R)}
E(\phi,\R),
\end{equation}
then the infimum is attained.
\end{theorem}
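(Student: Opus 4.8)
The plan is to run the concentration--compactness dichotomy of Theorem~\ref{ps} and to rule out its ``runaway'' branch under the hypothesis \eqref{below}. So I would fix a minimizing sequence $\{u_n\}\subset H^1_\mu(\G)$ for \eqref{minprob}; by Theorem~\ref{ps} it is weakly compact in $H^1(\G)$, and along a subsequence $u_n\rightharpoonup u$ with \emph{either} (ii) $u_n\to u$ strongly and $u$ a minimizer (and then the infimum is attained and there is nothing more to do), \emph{or} (i) $u\equiv 0$ and $u_n\to 0$ in $L^\infty_{\text{loc}}(\G)$. Everything reduces to showing that, when \eqref{below} holds, alternative (i) is impossible.

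Assume (i); replacing $u_n$ with $|u_n|$, we may take $u_n\ge0$. Since $\{u_n\}$ is minimizing, \eqref{Eneg} holds for $n$ large (Remark~\ref{remgen}), so Lemma~\ref{propgen} gives $\|u_n\|_{L^\infty(\G)}^2\ge C_p^{-1}\mu^{\beta+1}$ for all large $n$; but $u_n\to 0$ uniformly on the compact core $\K$. Hence, for $n$ large, $u_n$ attains its maximum at an interior point of some half-line and---passing to a subsequence---always on the same half-line $\HH$, say at $y_n$, whose coordinate on $\HH\cong[0,\infty)$ must tend to $+\infty$ (otherwise $u_n(y_n)=\|u_n\|_{L^\infty(\G)}$ would vanish along a subsequence, against the lower bound above). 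If $\G$ has at least two half-lines one may stop here, since Proposition~\ref{propmaxcc}(i) then gives $E(u_n,\G)\ge E(\phi_\mu,\R)=\elevel_\R(\mu)$, hence $\elevel_\G(\mu)\ge\elevel_\R(\mu)$, contradicting \eqref{below}; the argument below needs no restriction on the number of half-lines.

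For $n$ large, the ball of any fixed radius $r$ about $y_n$ in $\G$ is isometric to the interval $(-r,r)\subset\R$ (it reaches neither the vertex of $\HH$ nor $\K$), so the translates $w_n(x):=u_n(y_n+x)$ form a bounded sequence in $H^1_{\text{loc}}(\R)$; along a subsequence $w_n\rightharpoonup w$ in $H^1_{\text{loc}}(\R)$ and $w_n\to w$ locally uniformly, and, after a further subsequence, $w(0)=\lim_n\|u_n\|_{L^\infty(\G)}\in(0,\infty)$, so $w\in H^1(\R)$, $w\not\equiv0$, and $m:=\|w\|_{L^2(\R)}^2\in(0,\mu]$. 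I would then transplant a cut-off of $w$ back onto $\HH$ around $y_n$---call it $\psi_n$, which lies in $H^1(\G)$ for $n$ large since $y_n\to\infty$---and run the Brezis--Lieb splitting of \cite{BL} (as in the proof of Theorem~\ref{ps}, but following the moving bump, and using the weak convergence $w_n'\rightharpoonup w'$) to obtain
\[
E(u_n,\G)=E(u_n-\psi_n,\G)+E(\psi_n,\G)+o(1),
\]
the residual error vanishing as the cut-off window grows. Since $E(u_n-\psi_n,\G)\ge\elevel_\G(\|u_n-\psi_n\|_{L^2}^2)$, and in the limit $\|u_n-\psi_n\|_{L^2}^2\to\mu-m$ while $E(\psi_n,\G)\to E(w,\R)\ge\elevel_\R(m)$, using the continuity of $\elevel_\G$ (Theorem~\ref{subadd}) and letting $n\to\infty$ and then the window to infinity yields
\[
\elevel_\G(\mu)\ \ge\ \elevel_\G(\mu-m)+\elevel_\R(m).
\]

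It remains to discuss $m\in(0,\mu]$. If $0<m<\mu$, then $\elevel_\R(m)\ge\elevel_\G(m)$ by \eqref{trapped}, so the displayed inequality becomes $\elevel_\G(\mu)\ge\elevel_\G(\mu-m)+\elevel_\G(m)>\elevel_\G(\mu)$ by the \emph{strict} subadditivity of $\elevel_\G$ (Theorem~\ref{subadd}), which is absurd. Hence $m=\mu$, and then $\elevel_\G(\mu)\ge\elevel_\G(0)+\elevel_\R(\mu)=\elevel_\R(\mu)=\min_{\phi\in H^1_\mu(\R)}E(\phi,\R)$, contradicting \eqref{below}. So alternative (i) cannot occur, (ii) holds, and the infimum is attained. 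The step I expect to be the genuine obstacle is exactly the translated energy splitting above: locating the migrating maximum through Lemma~\ref{propgen}, extracting a \emph{nonzero} profile $w$ on $\R$, and controlling the cut-off cross-terms in the $H^1$ and $L^p$ norms so as to reach the last displayed inequality; once it is available, Theorems~\ref{ps} and~\ref{subadd} together with \eqref{below} finish the proof mechanically.
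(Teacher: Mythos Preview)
Your argument is correct in outline and would go through once the cut-off Brezis--Lieb splitting is carried out carefully (the double limit $n\to\infty$ then $R\to\infty$ does what you claim, since $w_n\to w$ in $L^\infty_{\text{loc}}$ and $w_n'\rightharpoonup w'$ in $L^2_{\text{loc}}$, and the cross terms are supported on $\{|t|\in[R/2,R]\}$ where $w,w'$ are small in $L^2$). So there is no genuine gap.

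However, your route is quite different from the paper's, and considerably longer. The paper avoids profile extraction entirely by a one-line rearrangement trick: once alternative~(i) holds and $\eps_n:=\max_\K u_n\to 0$, the truncation $v_n:=(u_n-\eps_n)^+$ vanishes on the compact core and at infinity along every half-line, so every level $t\in(0,\max v_n)$ has at least two preimages; then the symmetric rearrangement $\widehat v_n$ on $\R$ (Proposition~3.1 of \cite{ast}) gives $E(v_n,\G)\ge E(\widehat v_n,\R)\ge\elevel_\R(\Vert v_n\Vert_{L^2}^2)$, and since $\Vert u_n-v_n\Vert_{H^1}=o(1)$ one passes to the limit and contradicts \eqref{below} directly. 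No bump tracking, no Brezis--Lieb for moving profiles, no subadditivity needed at this stage. Your concentration-compactness approach is the textbook one and would transplant to settings where no useful rearrangement onto $\R$ is available; the paper's approach exploits the rearrangement machinery it has already set up, and is both shorter and cleaner for the problem at hand.
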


\begin{proof}
Given a minimizing sequence $\{u_n\}\subset  H_\mu^1(\G)$, it suffices to rule
out case (i) of Theorem~\ref{ps}. Therefore,
we assume that $u_n\to 0$ in $L^\infty_{\text{loc}}(\G)$, and we seek a contradiction.

If $\eps_n$ denotes the maximum of $u_n$ on the compact core of $\G$, clearly
$\eps_n\to 0$.
Therefore, letting $v_n:=\max\{0, u_n-\eps_n\}$,
since $\G$ contains at least one half-line along which $u_n\to 0$,
we see that the number of preimages $v_n^{-1}(t)$ is at least $2$
for every $t\in(0, \max v_n)$ and every $n$. If $\widehat v_n$ is the symmetric rearrangement of $v_n$ on $\R$,
Proposition~3.1 of \cite{ast} shows that
\[
E(v_n,\G) \ge E(\widehat v_n,\R) \ge \min_{\phi \in H^1_\mu(\R)} E(\phi,\R).
\]
Moreover, since $\| u_n-v_n\|_{H^1(\G)}=o(1)$ as $n\to \infty$,  $\{v_n\}$ is still
a minimizing sequence so that, letting $n\to\infty$ in the previous inequality,
the strict inequality in \eqref{below} is violated.
\end{proof}

The following immediate corollary is the operative version of the preceding result.

\begin{corollary}
\label{oper}
Let $\G$ be a noncompact graph. If there exists $u\in H_\mu^1(\G)$ such that
\begin{equation}
\label{assoper}
E(u,\G) \le  E(\phi_\mu,\R),
\end{equation}
then $\G$ admits a ground state of mass $\mu$.
\end{corollary}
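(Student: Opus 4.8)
The plan is to derive Corollary~\ref{oper} directly from Theorem~\ref{main1} by dealing with the two possible cases for the strict inequality \eqref{below}. The hypothesis \eqref{assoper} gives us a competitor $u\in H^1_\mu(\G)$ with $E(u,\G)\le E(\phi_\mu,\R)$, and since $\phi_\mu$ is the soliton, the right-hand side equals $\min_{\phi\in H^1_\mu(\R)}E(\phi,\R)$ by Remark~\ref{remsol}. Hence $\elevel_\G(\mu)\le E(u,\G)\le \elevel_\R(\mu)$.

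First, suppose the inequality $\elevel_\G(\mu)<\elevel_\R(\mu)$ is \emph{strict}. Then \eqref{below} holds verbatim, and Theorem~\ref{main1} applies to give a ground state of mass $\mu$; we are done.

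Second, suppose instead that $\elevel_\G(\mu)=\elevel_\R(\mu)$, so that the competitor $u$ already satisfies $E(u,\G)=\elevel_\G(\mu)$ and is therefore itself a minimizer — the infimum in \eqref{minprob} is attained by $u$, and again $\G$ admits a ground state of mass $\mu$. In either case the conclusion follows, which proves the corollary.

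I do not expect any genuine obstacle here: the only point requiring a moment's care is the bookkeeping of the non-strict versus strict alternative, namely observing that when Theorem~\ref{main1} does not directly apply it is precisely because $u$ is already optimal. Everything else is just reading off that $E(\phi_\mu,\R)=\min_{\phi\in H^1_\mu(\R)}E(\phi,\R)$ from Remark~\ref{remsol}.
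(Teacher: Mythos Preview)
Your proof is correct and essentially identical to the paper's: the paper frames the dichotomy as ``either $u$ is already a ground state, or else $\elevel_\G(\mu)<E(u,\G)\le E(\phi_\mu,\R)$ so \eqref{below} holds and Theorem~\ref{main1} applies,'' while you frame it as ``either $\elevel_\G(\mu)<\elevel_\R(\mu)$ and Theorem~\ref{main1} applies, or else equality forces $u$ to be a minimizer.'' These are the same two-line argument with the case split drawn along a different (but equivalent) axis.
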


\begin{proof} If the mentioned $u$ is not a ground state, then \eqref{below} is satisfied because
$\phi_\mu$ is a ground state on $\R$,
and Theorem~\ref{main1} applies.
\end{proof}

We now show, through a series of examples, how Corollary~\ref{oper} can be applied,
to detect the existence of ground states.

In all the examples below, a soliton of arbitrary mass is
cut, and its pieces are pasted and possibly partly rearranged on the graph $\G$ under study.
The construction always ends up with a function in the space $H^1(\G)$
with the same mass as the original soliton, but with a lower (or equal) energy: then,
as a result of Corollary~\ref{oper}, all the graphs in Figures \ref{bubbles}--\ref{3-fork}
admit a ground state of mass $\mu$, for every $\mu>0$.

\medskip

First, in Fig. \ref{bubbles} it is shown how a soliton, due to its symmetries, can be placed on
a {\em line with two circles}, i.e. on a graph $\G$ made up of two half-lines,
two edges each of length $\ell_1/2$, and a self-loop of length $\ell_2$, arranged
as in Figure~\ref{bubbles}.b (the resulting function achieves its maximum at the north pole
of the upper circle).
This procedure does not affect the
value of the energy, so a ground
state exists by Corollary \ref{oper} (in fact, as shown in 
\cite{ast}, the ground state is the function just now constructed).

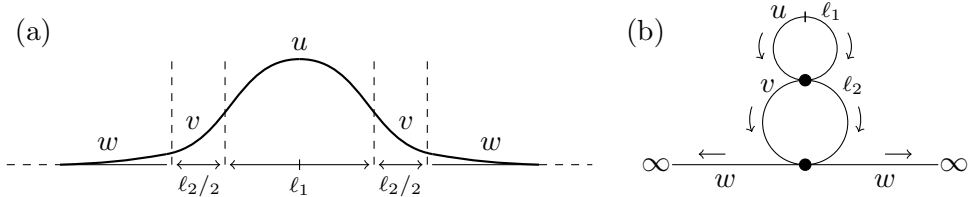
\begin{figure}[ht]    
\begin{center}
\begin{tikzpicture}[xscale= 0.7,yscale=0.7]
\draw[-,thick] (-6,2) to [out=0,in=170] (-3.5,0.2);
\draw[-,thick] (-6,2) to [out=180,in=10] (-8.5,0.2);
\draw[-,thick] (-3.5,0.2) to [out=350,in=178] (-1.5,0);
\draw[-,thick] (-8.5,0.2) to [out=190,in=2] (-10.5,0);
\node at (-8.4,0)  [infinito] (6) {\phantom{.}};
\draw [dashed] (-8.4,0) -- (-8.4,2);
\node at (-7.4,0)  [infinito] (7) {\phantom{.}};
\draw[dashed] (-7.4,0)--(-7.4,2);
\node at (-4.6,0)  [infinito] (8) {\phantom{.}};
\draw[dashed] (-4.6,0)--(-4.6,2);
\node at (-3.6,0)  [infinito] (9) {\phantom{.}};
\draw[dashed] (-3.6,0)--(-3.6,2);
\draw[<->] (6) -- (7);
\draw[<->] (7) -- (8);
\draw[<->] (8) -- (9);
\draw[-] (9) -- (-1.5,0);
\draw[-] (6) -- (-10.5,0);
\draw[-] (-6,-0.1) -- (-6,0.1);
\draw[dashed] (-11.5,0) -- (-10.5,0);
\draw[dashed] (-1.5,0) -- (-.5,0);
\node at (-6,2.3)  {$u$};
\node at (-4,.8)  {$v$};
\node at (-8,.8)  {$v$};
\node at (-2.4,.4)  {$w$};
\node at (-9.6,.4)  {$w$};
\node at (-6,-0.4) {$\scriptstyle{\ell_1}$};
\node at (-4.1,-0.4) {$\scriptstyle{\ell_2/ _2}$};
\node at (-7.88,-0.4) {$\scriptstyle{\ell_2/ _2}$};
\node at (-11,2.5) {(a)};
\draw[-] (1,0) -- (6,0);
\node at (.7,0) [infinito]  {$\infty$};
\node at (6.3,0) [infinito]  {$\infty$};
\node at (3.5,0) [nodo] (1) {};
\node at (3.5,1.6) [nodo] (2) {};
\draw(3.5,.8) circle (.8);
\draw(3.5,2.2) circle (.6);
\draw [->] (2.5,1.1) arc [radius=.7, start angle=160, end angle= 200];
\draw [->] (4.5,1.1) arc [radius=.7, start angle=20, end angle= -20];
\draw [->] (2.7,2.5) arc [radius=.5, start angle=150, end angle= 210];
\draw [->] (4.3,2.5) arc [radius=.5, start angle=30, end angle= -30];
\draw[->] (2,0.2) -- (1.5,0.2);
\draw[->] (5,0.2) -- (5.5,0.2);
\draw[-] (3.5,2.7) -- (3.5,2.9);
\node at (4,2.9) {$\scriptstyle{\ell_1}$};
\node at (4.4,1.5) {$\scriptstyle{\ell_2}$};
\node at (3,2.9) {$u$};
\node at (2.8,1.5) {$v$};
\node at (2,-.3) {$w$};
\node at (5,-.3) {$w$};
\node at (0.5,2.5) {(b)};
\draw [-] (3.5,2.7) -- (3.5,2.9);  
\end{tikzpicture}
\end{center}
\caption{\footnotesize{{\bf a line with two circles}. A soliton $\phi_\mu$, in (a), can be
folded and placed, without changing its energy, on the graph in (b): the two circles have length
$\ell_1$ and $\ell_2$, while arrows indicate the directions in which the function
decreases.}}
\label{bubbles}
\end{figure}

\medskip As a second example we consider a {\em signpost} graph $\G$,
made up of two half-lines, one edge of length $\ell_2$, and another edge of length $\ell_1$
that forms a self-loop, as
in Fig. \ref{palina}. The caption
explains how a function can be constructed, with an energy level strictly
lower than the soliton's, starting from the function in Figure~\ref{bubbles}.b.

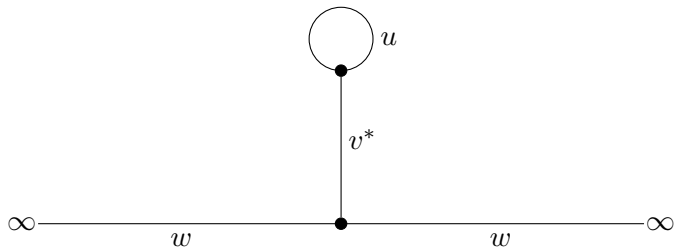
\begin{figure}[ht]
\begin{center}
\begin{tikzpicture}[xscale= 0.7,yscale=0.7]
\node at (6,0) [infinito]  (1) {$\infty$};
\node at (12,0) [nodo] (2) {};
\node at (18,0) [infinito]  (3) {$\infty$};
\node at (16,0)  [minimum size=0pt] (5) {};
\node at (8,0) [minimum size=0pt] (4) {};
\node at (12,2.9) [nodo] (6) {};
\draw[-] (1) -- (3);
\draw(12,3.5) circle (0.6);
\draw[-] (2) -- (6);
\node at (12.9,3.5)  {$u$};
\node at (12.4,1.6)  {$v^*$};
\node at (9,-.3)  {$w$};
\node at (15,-.3)  {$w$};
\end{tikzpicture}
\end{center}
\caption{\footnotesize{{\bf a signpost graph.}
The folded soliton  of Fig. \ref{bubbles}.b can be placed on the
signpost, after a monotone rearrangement of $v$ from the circle to an interval $I$ of length $\ell_2$,
thus obtaining
a function $v^* \in H^1 (I)$. Energy is decreased by the rearrangement.
 }}
\label{palina}
\end{figure}

\noindent
A similar strategy works when $\G$ consists of  two half-lines and a terminal edge,
as explained in Figure~\ref{baffo}. The existence of ground states for this $\G$
was established in \cite{ast} by ad hoc techniques, specific to this graph.

\begin{figure}[ht] \label{baffo} 
\begin{center}
\begin{tikzpicture}[xscale= 0.7,yscale=0.7]
\node at (6,0) [infinito]  (1) {$\infty$};
\node at (12,0) [nodo] (2) {};
\node at (18,0) [infinito]  (3) {$\infty$};
\node at (16,0)  [minimum size=0pt] (5) {};
\node at (8,0) [minimum size=0pt] (4) {};
\node at (12,1.9) [nodo] (6) {};
\node at (12,3.3) [nodo] (7) {};
\draw[-] (1) -- (3);
\draw [-] (6) -- (7);
\draw[-] (2) -- (6);
\node at (12.4,2.8)  {$u$};
\node at (12.4,1)  {$v^*$};
\node at (9,-.3)  {$w$};
\node at (15,-.3)  {$w$};
\end{tikzpicture}
\end{center}
{\caption {\footnotesize{\bf line with a terminal edge.}
Starting from the function in Figure~\ref{palina}, whose energy level
is lower than the soliton's,
 one may further unfold
the self-loop, obtaining a function fitted to a line with a single pendant, of length $\ell_1\!+\!\ell_2$.
}}
\end{figure}
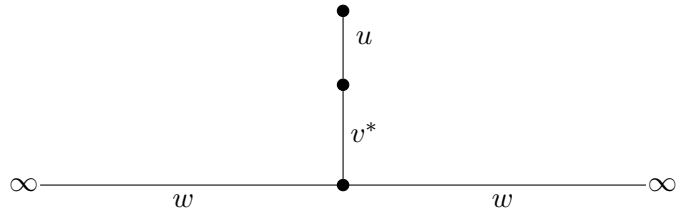

\medskip

\noindent Also for a {\em tadpole} graph, i.e. a half-line attached to a self-loop as in Figure~\ref{tadpole},
one can easily construct a function more performant than the soliton, as explained in the caption.
Tadpole graphs were considered in (\cite{noja,pelinovsky}), as concerns the
existence of solutions to \eqref{euleroforte} with the Kirchhoff condition: now we know that,
among those solutions, there exist ground states of arbitrary prescribed mass.

\begin{figure}[ht]
\begin{center}
\begin{tikzpicture}[xscale= 0.7,yscale=0.7]
\node at (-3,0) [infinito]  (1) {$\infty$};
\node at (3,0) [nodo]  (3) {};
\node at (-2,0) [minimum size=0pt] (4) {};
\draw [-] (1,-.1) -- (1,.1);
\draw [-] (1) -- (3);
\node at (4.2,.7) {$u$};
\node at (2,-.3) {$v^*$};
\node at (-.7,-.3) {$w^*$};
\draw(3.6,0) circle (.6);
\end{tikzpicture}
\end{center}
\caption{\footnotesize{{\bf a tadpole graph.}
The energy of the soliton can be lowered by rearranging $v$ as in
Fig. \ref{palina}.
Furthermore, by a monotone rearrangement from $H^1(\R)$ to $H^1(\R^+)$, the two soliton tails
denoted by $w$ can be melted into a single function $w^*$.
}}
\label{tadpole}
\end{figure}
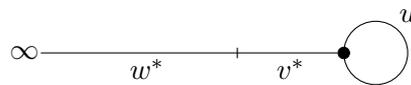

\medskip

\noindent As a last example, we consider a $3$-{\em fork} graph, which can be handled as explained in Fig.~\ref{3-fork}.
By a proper choice of the initial lengths $\ell_1,\ell_2$, and of the point where the second self-loop is
opened, the lengths of the three spikes can be made arbitrary. The case of a \emph{2-fork} can be treated as
a degenerate case, when $\ell_2=0$ in Figure~\ref{bubbles}.a, so that no $v$ is needed and
the lower circle in Figure~\ref{bubbles}.b disappears.

\medskip

Finally, we mention that several other examples can be made if, for instance, one
folds a soliton and fits it to
a line
with a ``tower of $n$ circles''  ($n\geq 2$) stacked together: Figure~\ref{bubbles} corresponds
to $n=2$, but the construction is easily generalized when $n>2$.

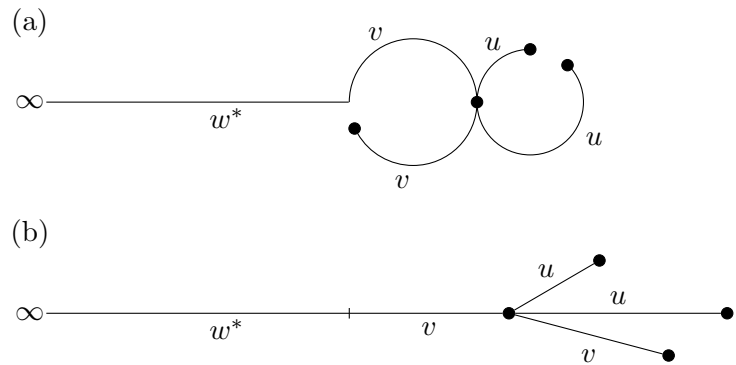
\begin{figure}[ht] \label{bolleaperte}
\begin{center}
\begin{tikzpicture}[xscale= 0.7,yscale=0.7]
\node at (2.4,0) [nodo] {};
\node at (3.4,1) [nodo] {};
\node at (4.1,.7) [nodo] {};
\node at (.1,-.5) [nodo] {};
\node at (0,0) [infinito] (0) {};
\node at (-6,0) [infinito] (1) {$\infty$};
\node at (-6,1.5) {(a)};
\draw [-] (0) -- (1);
\draw [-] (0,0) arc [radius=1.2, start angle=180, end angle= -150];
\draw [-] (2.4,0) arc [radius=1, start angle=180, end angle=90];
\draw [-] (2.4,0) arc [radius=1, start angle=-180, end angle= 45];
\node at (-2.3,-.3) {$w^*$};
\node at (.5,1.3) {$v$};
\node at (2.7,1.1) {$u$};
\node at (4.6,-.7) {$u$};
\node at (1,-1.5) {$v$};
%
\node at (-6,-4) [infinito]  (1) {$\infty$};
\node at (-2.3,-4.3) {$w^*$};
\coordinate (2bis) at (0,-4);
\draw [-] [label=above:ciao] (1) -- (2bis);
\node at (-6,-2.5)  {(b)};
\draw [-] (0,-4.1) -- (0,-3.9);

\node at (1.5,-4.3) {$v$};

\begin{scope}[shift={(3,0)}]
\node at (0,-4) [nodo] (2) {};
\draw [-] (2bis) -- (2);
\node at (4.1,-4) [nodo]  (3) {};
\node at (-3.5,-4) [minimum size=0pt] (6) {};
\node at (1.7,-3) [nodo] (7) {};
\node at (3,-4.8) [nodo] (8) {};
\draw [-] (3) -- (2) ;
\draw [-] (2) -- (7) ;
\draw [-] (2) -- (8) ;
\node at (.7,-3.2) {$u$};
\node at (2.05,-3.7) {$u$};
\node at (1.5,-4.8) {$v$};
\end{scope}
\end{tikzpicture}
\end{center}
 \caption{\footnotesize{{\bf a 3-fork graph.} Construction of a competitor:
first, by a monotone rearrangement, the two tails $w$ in Figure~\ref{bubbles}.b are
melted into $w^* \!\in\! H^1 (\R^+)$, then the two self-loops are opened  (a).
The resulting function now fits the $3$-fork graph (b).
}}
\label{3-fork}
\end{figure}

\section{Stability and threshold phenomena}

The main purpose of this section is to show that the existence of
a ground state on a graph does not depend on the topology of the
graph only, but also on the interplay between its metric properties (e.g the length of some edges) and the
mass $\mu$. In some cases this will lead to sharp threshold phenomena, in which a graph
$\G$ admits a ground state {\em if and only if} a certain quantity exceeds a critical value.

\noindent Since, in order to have existence, assumption (H) must be
violated, we focus on the simplest possible violation of it,
namely the presence of a terminal edge, that we shall model as the
interval $[0, \ell]$. The first result is a consequence of Corollary \ref{oper} and shows that if
$\mu^\beta \ell $ is large enough then a minimizer exists.

\begin{proposition}
\label{corbaffolungo}
Let $\G$ be a noncompact graph with a terminal edge of length $\ell$.
There exists $C_p^* > 0$ such that if $\mu^\beta\ell \geq  C_p^*$,
then $\G$ admits a ground state.
\end{proposition}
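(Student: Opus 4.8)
The strategy is to apply Corollary~\ref{oper}: it suffices to construct, for $\mu^\beta\ell$ large enough, a competitor $u\in H^1_\mu(\G)$ with $E(u,\G)\le E(\phi_\mu,\R)$. The natural idea is to take a soliton on the real line and ``fold'' it onto the terminal edge, doubling the mass we can pack into a given amount of length. Concretely, let $[0,\ell]$ be the terminal edge, attached to the rest of $\G$ at $x=0$, and consider the half-soliton profile $\phi_{2m}$ restricted to $\R^+$, which has mass $m$ and energy $\elevel_{\R^+}(m)=-2^{2\beta}\theta_p m^{2\beta+1}$ by \eqref{energiamezzosol}. If $\ell$ were infinite we could simply put this half-soliton on the terminal edge (with any $m\le\mu$) and be done; since $\ell$ is finite, we truncate: put on $[0,\ell]$ the function $x\mapsto \phi_{2m}(x)$ for $x\in[0,\ell]$, extend it by the constant $\phi_{2m}(\ell)$ along one half-line of $\G$ — wait, that has infinite mass — so instead we glue, at $x=\ell$, a decaying soliton tail (a piece of $\phi_{m'}$ with the matching height) running off to infinity along a half-line of $\G$, choosing the masses so that the total is $\mu$. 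By Remark~\ref{remscal} we may normalize $\mu=1$, so the hypothesis becomes $\ell\ge C_p^*$, and we must produce a competitor of mass $1$ and energy $\le -\theta_p$.

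The cleanest implementation: by scaling, fix a soliton $\phi_1$ on $\R$ with $E(\phi_1,\R)=-\theta_p$, and cut it at the origin into two half-soliton tails, each of mass $1/2$, each a monotone function on $\R^+$ with $L^\infty$ norm $\phi_1(0)$ and energy $-\theta_p/2$. Now take one such tail and reflect it: place on the terminal edge $[0,\ell]$ the function obtained by gluing, at its midpoint, a ``rising'' half-tail coming up from $x=0$ and a ``falling'' half-tail going down towards $x=\ell$, i.e. fold the full soliton $\phi_1$ so its peak sits inside $[0,\ell]$. Provided $\ell$ is large enough that the soliton, truncated to a window of width $\ell$ centered at its peak, retains mass at least — no: rather, we use a soliton of small enough mass $m=m(\ell)$ that it essentially fits inside $[0,\ell]$, carry the remaining mass $1-m$ out along a half-line as a soliton tail of appropriate height, and control the gluing error. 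Because $\phi_m$ concentrates on a length scale $\sim m^{-\beta}$ as $m\to0$, for $\ell$ large we can take $m$ close to $1$: the folded soliton of mass $m$ fits in $[0,\ell]$ up to exponentially small tails, contributes energy $\approx -\theta_p m^{2\beta+1}$, and the leftover mass $1-m=o(1)$ contributes energy $o(1)$, for a total $\le -\theta_p + o(1)$ as $\ell\to\infty$ — which is not quite $\le-\theta_p$. The fix is that the folded configuration is genuinely \emph{better} than a plain soliton: folding $\phi_m$ onto $[0,\ell]$ and then attaching the leftover as a \emph{half}-soliton tail along a single half-line exploits the $2^{2\beta}$ gain of \eqref{energiamezzosol}. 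One should instead compare against $\elevel_{\R^+}$: build a competitor that looks like a half-soliton of mass $1$ on $\R^+$, but with the ``$\R^+$'' realized by the terminal edge followed by a half-line; since $\elevel_{\R^+}(1)=-2^{2\beta}\theta_p < -\theta_p$, there is a fixed gap to absorb the truncation error once $\ell$ is large. Precisely: take the half-soliton $\phi_2|_{\R^+}$ (mass $1$, energy $-2^{2\beta}\theta_p$), lay it along the terminal edge for $x\in[0,\ell]$ and continue it along a half-line of $\G$ for $x\ge\ell$; this is an admissible function in $H^1_1(\G)$ with energy exactly $\elevel_{\R^+}(1)=-2^{2\beta}\theta_p<-\theta_p=E(\phi_1,\R)$. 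Hence \eqref{assoper} holds for \emph{every} such $\G$ — with no largeness assumption at all?

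The catch — and the main obstacle — is that this last construction requires a half-line emanating from the \emph{far} endpoint of the terminal edge, which need not exist: a terminal edge is by definition a pendant, its far endpoint $x=\ell$ is a vertex of degree one. So the function must turn around at $x=\ell$ and come back. Thus the honest competitor is: fold a half-soliton tail of mass $m$ so that it rises along $[0,\ell]$ to height $\phi_{2m}(0)$ at $x=0$... no — it must \emph{vanish nowhere abruptly}; the right picture is to place a \emph{full} folded soliton peaked near $x=\ell$ (Neumann-type reflection at the pendant tip costs nothing by symmetry) with its two descending tails running back towards $x=0$ and then out along two half-lines of $\G$ (or one, doubled by rearrangement). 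This is exactly the mechanism of Figures~\ref{bubbles}--\ref{3-fork}. For the terminal-edge graph one gets a competitor whose energy is $E(\phi_m,\R)$ up to the error from truncating the soliton's tails at length $\ell$ and re-gluing; since $E(\phi_m,\R)=-\theta_p m^{2\beta+1}$ and we need this $\le -\theta_p$, we need $m\ge1$, i.e. we must fit essentially all of mass $1$ into the folded configuration, and the truncation error is exponentially small in $\ell\cdot(\text{height scale})$. So: choose $m$ slightly bigger than $1$ — say $m=1+\delta$ — fold $\phi_{2m}$-tails along $[0,\ell]$ and out to infinity, truncate so the total mass is exactly $1$; the energy is $-\theta_p(1+\delta)^{2\beta+1}+(\text{gluing error})\le -\theta_p(1+\delta)^{2\beta+1}+Ce^{-c\ell}$, which is $\le-\theta_p$ once $\delta>0$ is fixed and $\ell\ge C_p^*$. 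The hard part is thus not the idea but the bookkeeping: writing down an explicit $H^1_1(\G)$ function (rising piece on part of $[0,\ell]$, matched to a truncated soliton peak, matched to descending tails on the half-line(s)), verifying continuity at the internal vertices and the mass normalization, and estimating both the kinetic and potential energy of the truncation/regluing against $\theta_p$ with an error $o(1)$ as $\ell\to\infty$ — uniformly in the unspecified rest of $\G$, using that along any half-line the competitor is a genuine soliton tail so the only $\G$-dependence is harmless. Once that error is shown to vanish as $\ell\to\infty$, the threshold $C_p^*$ is simply the $\ell$ beyond which it drops below the fixed gap $\theta_p((1+\delta)^{2\beta+1}-1)$.
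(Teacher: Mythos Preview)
Your plan meanders through several constructions and settles on one that has a genuine gap. The final competitor you describe has its peak at the pendant's tip and carries \emph{nonzero} tails back through the attachment vertex and ``out along half-lines of $\G$''. But the attachment vertex need not be the origin of any half-line: between it and the nearest half-line there may lie an arbitrary compact core $\K$, and your function must be defined (and continuous) on all of it. Any such extension contributes mass and energy depending on $|\K|$, $\diam(\K)$, etc., so the threshold you obtain depends on $\G$, not just on $p$. Your claim that ``the only $\G$-dependence is harmless'' is exactly the point that fails.

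The paper's proof sidesteps this entirely by making the competitor \emph{compactly supported in the pendant}. Since $\elevel_{\R^+}(1)=-2^{2\beta}\theta_p<-\theta_p=\elevel_\R(1)$, one can take $(\phi_2-\eps)^+$ on $\R^+$ for small $\eps$, renormalize to mass $1$, and obtain a function $u$ with $E(u,\R^+)\le E(\phi_1,\R)$ that vanishes identically for $x\ge C_p^*$. If $\ell\ge C_p^*$, place $u$ on the terminal edge (peak at the free tip, attachment at $x=\ell$) and extend by zero to the rest of $\G$; continuity is automatic because $u$ already vanishes at the attachment point. This gives a competitor in $H^1_1(\G)$ satisfying \eqref{assoper}, with no reference whatsoever to the structure of $\G\setminus[0,\ell]$, and Corollary~\ref{oper} finishes. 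You in fact brushed past this idea midway (``take the half-soliton $\phi_2|_{\R^+}$ \ldots\ lay it along the terminal edge'') but discarded it for the wrong reason: the correct resolution of ``the pendant has a dead end'' is not to thread tails back into $\G$, but to truncate the half-soliton so it dies \emph{before} reaching the attachment vertex.
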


\begin{proof}
By scaling (Remark~\ref{remscal}), we may assume that $\mu = 1$. Since
$2^{2\beta}>1$, putting $\mu=1$ in \eqref{energiamezzosol} and \eqref{energiasol} we
have $E(\phi_2,\R^+) < E(\phi_1, \R)$: therefore,
if $C^*_p$ is large enough,
there exists a function $u \in H^1(\R^+)$
such that
\[
E(u,\R^+) \leq E(\phi_1, \R),\quad \int_0^{+\infty} |u|^2\,dx=1,\quad u(x)=0\quad\forall x\geq C_p^*
\]
(for instance, one may take $(\phi_2-\eps)^+$ with $\eps$ small, and then renormalize it in
$L^2(\R^+)$). Now, if $\ell\geq C_p^*$,
we may interpret $[0,\ell]$ as the terminal edge of $\G$
(attached to $\G$ at $x=\ell$)
and extend $u\equiv 0$ on the rest of $\G$: we have thus constructed a function $u\in H^1(\G)$
of mass $1$, such that
$E(u,\G) < E(\phi_1, \R)$. By Corollary \ref{oper}, $\G$ admits a ground state of mass $1$.
\end{proof}

The next result illustrates a sort of stability as regards the existence of ground states. In particular
it says, loosely speaking, that the ``limit'' of shrinking graphs carrying a ground state also carries a ground state.

Let $\G$ be any noncompact graph and, for every $n\in {\mathbb
N}$, let $\K_n$ be a connected compact graph, of \emph{total length}
$|\K_n|$. We denote by
$\G_n$ the graph obtained by attaching $\K_n$ to $\G$, at some (fixed)
point $\vv$ of $\G$. Notice that, in this way,
$\G$ can be regarded as a subgraph of $\G_n$.

\begin{remark}\label{rembaf} The simplest example of this structure is when each $\K_n$ is a single edge
of length $\ell_n$, attached
to $\G$ as a (further) terminal edge.
\end{remark}

\begin{theorem}
\label{stab} Let $\mu >0$. If every $\G_n$
admits a ground state $u_n$ of mass $\mu$, and $|\K_n| \to 0$ as $n\to
\infty$, then also $\G$ admits a ground state of mass $\mu$.
\end{theorem}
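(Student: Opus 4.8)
The plan is to use the ground states $u_n$ on $\G_n$ as competitors on $\G$ itself and show that, in the limit, they provide a function $u\in H^1_\mu(\G)$ whose energy does not exceed $\elevel_\G(\mu)$, so that Corollary~\ref{oper} (or directly Theorem~\ref{main1}) applies. First I would recall that $\G$ is a subgraph of each $\G_n$, and that by the a priori estimates of Lemma~\ref{propgen} (valid since each $u_n$ is a ground state, hence satisfies \eqref{Eneg}) the sequence $\{u_n\}$ is bounded in $H^1\cap L^\infty$ with norms controlled solely by $\mu$ and $p$. The key point is that the ``extra'' mass and energy carried by $u_n$ on the vanishing piece $\K_n$ must go to zero: since $\|u_n\|_{L^\infty(\G_n)}^2\le C_p\mu^{\beta+1}$ and $|\K_n|\to 0$, we get $\int_{\K_n}|u_n|^2\,dx\le C_p\mu^{\beta+1}|\K_n|\to 0$, and similarly $\int_{\K_n}|u_n|^p\,dx\to 0$. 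For the derivative term one uses \eqref{euleroforte}: on $\K_n$, $u_n$ solves $u_n''+u_n|u_n|^{p-2}=\lambda_n u_n$ with $\lambda_n$ bounded by Corollary~\ref{corlambda}, so testing against $u_n$ on $\K_n$ (integrating by parts, the boundary terms being controlled because at the attachment vertex the Kirchhoff condition and the $H^1$ bound keep everything finite) shows $\int_{\K_n}|u_n'|^2\,dx\to 0$ as well. Hence $E(u_n,\G_n)=E(u_n|_{\G},\G)+o(1)$ and the restriction $\tilde u_n:=u_n|_{\G}$ is an ``almost minimizing'' sequence on $\G$ of mass $\mu-o(1)$.

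Next I would pass to the limit. After renormalizing $\tilde u_n$ in $L^2(\G)$ to have exact mass $\mu$ (a correction of size $o(1)$ that changes the energy only by $o(1)$, again using the uniform bounds), $\{\tilde u_n\}$ is a genuine minimizing sequence for \eqref{minprob} on $\G$: indeed $\limsup_n E(\tilde u_n,\G)\le \limsup_n E(u_n,\G_n)=\limsup_n \elevel_{\G_n}(\mu)$, and one checks $\limsup_n\elevel_{\G_n}(\mu)\le \elevel_\G(\mu)$ because any competitor on $\G$ is also a competitor on $\G_n$ (so $\elevel_{\G_n}(\mu)\le\elevel_\G(\mu)$ for every $n$, as $\G\subset\G_n$). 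Combined with the universal lower bound $\elevel_\G(\mu)\le E(\tilde u_n,\G)+o(1)$ from the definition, this forces $E(\tilde u_n,\G)\to\elevel_\G(\mu)$. Now apply the dichotomy of Theorem~\ref{ps}: up to subsequences $\tilde u_n\rightharpoonup u$ in $H^1(\G)$, and either $u\equiv 0$ or $u$ is a minimizer and the convergence is strong.

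It then remains to exclude the vanishing alternative $u\equiv 0$. This is where I expect the main work to be. The idea is that if $\tilde u_n\to 0$ in $L^\infty_{\mathrm{loc}}(\G)$, then in particular $u_n\to 0$ uniformly on a neighbourhood of $\vv$ in $\G$, hence (by the $L^\infty$ bound and $|\K_n|\to 0$) also on all of $\K_n$; so $u_n\to 0$ in $L^\infty_{\mathrm{loc}}(\G_n)$ in a suitable uniform sense. But each $u_n$ is a \emph{genuine ground state} on $\G_n$, so by Proposition~\ref{propmaxcc}(ii) its $L^\infty$ norm is attained on the compact core $\K_{\G_n}$ of $\G_n$ (unless $\G_n\cong\R$, a case one handles separately since then $\G$ is a half-line and a ground state trivially exists by Remark~\ref{remsol}). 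The compact core of $\G_n$ is contained in $\K_\G\cup\K_n$ where $\K_\G$ is the compact core of $\G$ — a \emph{fixed} compact set together with the vanishing $\K_n$ — on all of which $u_n\to 0$ uniformly; this gives $\|u_n\|_{L^\infty(\G_n)}\to 0$, contradicting the lower bound $\|u_n\|_{L^\infty}^2\ge C_p^{-1}\mu^{\beta+1}>0$ of Lemma~\ref{propgen}. Therefore alternative (i) is impossible, case (ii) of Theorem~\ref{ps} holds, and $u$ is a ground state of mass $\mu$ on $\G$. The delicate points to get right are the precise control of $\int_{\K_n}|u_n'|^2$ via the Euler--Lagrange equation with its boundary terms, and making the passage ``$u_n\to0$ locally on $\G$'' imply ``$u_n\to0$ uniformly on the compact core of $\G_n$'' fully rigorous, using that the attachment point $\vv$ is fixed and the diameter of $\K_n$ shrinks.
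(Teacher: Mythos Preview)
Your overall strategy---restrict $u_n$ to $\G$, renormalize to mass $\mu$, show the result is a minimizing sequence, then apply the dichotomy of Theorem~\ref{ps} and exclude vanishing---closely parallels the paper's. Two points deserve attention.

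First, a minor one: the control of $\int_{\K_n}|u_n'|^2$ via the Euler--Lagrange equation on $\K_n$ is unnecessary and, as you yourself note, delicate because of the boundary terms at $\vv$. To obtain a minimizing sequence you only need the \emph{one-sided} estimate
\[
E(\tilde u_n,\G)=E(u_n,\G_n)-E(u_n,\K_n)\le E(u_n,\G_n)+\tfrac1p\int_{\K_n}|u_n|^p,
\]
which follows by dropping the nonnegative kinetic term on $\K_n$; after renormalization this already gives $E(\tilde u_n,\G)\le\elevel_\G(\mu)+o(1)$. The paper proceeds exactly this way.

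Second, and this is a genuine gap: your exclusion of the vanishing alternative invokes Proposition~\ref{propmaxcc}(ii) to place the maximum of $u_n$ on the compact core of $\G_n$. But that proposition assumes $\G_n$ has \emph{at least two} half-lines, whereas Theorem~\ref{stab} is stated for an arbitrary noncompact $\G$, including graphs with a single half-line (precisely the class treated in Section~\ref{sectionscopino}). When $\G$---hence each $\G_n$---has only one half-line, nothing prevents the maximum of $u_n$ from lying on that half-line at a point drifting to infinity (on the half-line, $u_n=\phi_{m_n}(x+y_n)$ with $m_n\sim\mu$, and $y_n<0$ is not ruled out); then $\|u_n\|_{L^\infty(\G_n)}$ need not tend to zero, and your contradiction with the lower bound in \eqref{st30} evaporates. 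Your parenthetical exception ``unless $\G_n\cong\R$'' addresses the wrong case: that is a two-half-line graph, not the problematic one.

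The paper avoids this difficulty by a sharper, quantitative argument that makes no reference to where the maximum sits. Arguing by contradiction from the start (so $\elevel_\G(1)=-\theta_p$ and, crucially, $E(v_n,\G)>-\theta_p$ \emph{strictly} for the renormalized $v_n=\sigma_n^{-1/2}u_n|_\G$), the chain of inequalities~\eqref{pre2} yields, after multiplying by $\sigma_n$ and writing $\int_{\K_n}|u_n|^p\le M_n^{p-2}(1-\sigma_n)$ with $M_n:=\|u_n\|_{L^\infty(\K_n)}$,
\[
(1-\sigma_n)\,\theta_p<\frac{M_n^{p-2}}{p}\,(1-\sigma_n).
\]
Since $u_n>0$ on $\K_n$ one has $\sigma_n<1$, so dividing by $1-\sigma_n$ gives $\theta_p<M_n^{p-2}/p\to 0$, a contradiction that is valid regardless of how many half-lines $\G$ has. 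This is the step your argument is missing.
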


\begin{proof}
Arguing by contradiction, let us assume that $\G$ has no ground state of mass 1
(by scaling, we may assume $\mu=1$). Then from \eqref{trapped} and \eqref{energiasol} we have
\begin{equation}
\label{pre1}
\elevel_{\G_n}(1)=E(u_n,\G_n)\leq -\theta_p,\qquad\elevel_\G(1)=-\theta_p,
\end{equation}
otherwise a ground state would exist on $\G$ too, by Theorem~\ref{main1}.
Letting $\sigma_n:=\int_{\G}|u_n|^2 dx$, by restriction to $\G$ we define $v_n:=\sigma_n^{-1/2} u_n$,
as a function in $H^1(\G)$ of mass 1. Since $v_n$ is not a ground state, $\sigma_n<\mu=1$ and $p>2$, we have from
\eqref{pre1} and \eqref{NLSe}
\begin{equation}
\begin{split}
\label{pre2}
-\theta_p<E(v_n,\G)\leq \frac 1 {\sigma_n} E(u_n,\G)
=\frac 1 {\sigma_n} \bigl( E(u_n,\G_n)-E(u_n,\K_n)\bigr)\\
=\frac 1 {\sigma_n} \bigl( -\theta_p-E(u_n,\K_n)\bigr)
\leq \frac 1 {\sigma_n} \left( -\theta_p+ \frac 1 p\int_{\K_n} |u_n|^pdx\right).
\end{split}
\end{equation}
Now, since $|\K_n|\to 0$ while $|u_n|\leq C_p$ by \eqref{st30}, we have
\[
\lim_{n\to\infty} \int_{\K_n} |u_n|^2\,dx
=0,\qquad
\lim_{n\to\infty} \int_{\K_n} |u_n|^p\,dx=0,
\]
and the first limit entails that $\sigma_n\to 1$, since $\Vert u_n\Vert_{L^2(\G_n)}^2=\mu=1$.
As a consequence, letting $n\to\infty$ in \eqref{pre2}, we have $E(v_n,\G)\to-\theta_p$ so that
$\{v_n\}$ is a minimizing sequence by \eqref{pre1}. Since $\G$ has no ground state, case (i) of Theorem~\ref{ps}
must occur relative to $\{v_n\}$, so that $v_n\to 0$ in $L^\infty_{\text{loc}}(\G)$ and hence, since
$\sigma_n\to 1$, also $u_n\to 0$ in $L^\infty_{\text{loc}}(\G)$. In particular,
if $\vv$ is the
point of $\G$ where each $\K_n$ is attached,
$u_n(\vv)\to 0$. Since $\K_n$ is connected,
\[
\max_{\K_n} u_n\leq u_n(\vv)+\int_{\K_n} |u'|\,dx \leq
u_n(\vv)+|\K_n|^{\frac 1 2}\cdot\Vert u_n'\Vert_{L^2(\G_n)}
\]
and, since $|\K_n|\to 0$,  applying \eqref{st29} to $u'_n$ on $\G_n$ we see that
\begin{equation}
\label{pre3}
M_n:=\Vert u_n\Vert_{L^\infty(\K_n)}\to 0\quad\text{as $n\to\infty$.}
\end{equation}
Now, multiplying \eqref{pre2} by $\sigma_n$, moving $-\theta_p$ to the left hand side,
and using $M_n$ to partially estimate the last integral, we find
\[
(1-\sigma_n) \theta_p<\frac{M_n^{p-2}}p \int_{\K_n} \!\!|u_n|^2dx
=
\frac{M_n^{p-2}}p \left(\int_{\G_n} \!\!|u_n|^2dx-\sigma_n\right)
=
\frac{M_n^{p-2}}p (1-\sigma_n)
\]
having used $\mu=1$ in the last passage. Since $\sigma_n\in (0,1)$, dividing by $1-\sigma_n$ and letting
$n\to\infty$, from \eqref{pre3} we obtain $\theta_p\leq 0$, which is a contradiction since $\theta_p>0$ by \eqref{energiasol}.
\end{proof}

We are now ready to describe the threshold phenomenon mentioned in the Introduction.
We will show that a sharp phase transition between existence and nonexistence of ground sates may occur,
when a certain quantity  passes through a critical value.

To illustrate this phenomenon, we consider the case where $\G$
is a star--graph made up of several half-lines and one bounded edge of length $\ell$:
existence of ground states of mass $\mu$ then depends on the product $\mu^\beta \ell$,
and a precise threshold arises.
Our techniques are to some extent ad hoc,
and it is an open problem to investigate the validity of this threshold phenomenon,
on a generic graph $\G$.

\begin{theorem}\label{teoksemirette}
Let $\G_\ell$ denote the graph made up of $N$ half-lines ($N\geq 3$)
and a terminal edge of length $\ell$, all emanating from the same vertex $\vv$.
Then there exists $C^*>0$ (depending only on $N$ and $p$) such that
\begin{equation}
 \inf_{w\in
H^1_\mu(\G_\ell)}E(w,\G_\ell)\text{ is attained} \quad\iff\quad \mu^\beta \ell \ge C^*.
\end{equation}
\end{theorem}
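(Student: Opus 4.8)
The plan is to fix $\mu=1$ by scaling (Remark~\ref{remscal}): since the problem with mass $\mu$ on $\G_\ell$ is equivalent to the one with mass $1$ on $\G_{\mu^\beta\ell}$, it suffices to prove that the set $A:=\{\ell>0 \suchthat \elevel_{\G_\ell}(1)\text{ is attained}\}$ equals a half-line $[C^*,\infty)$, with $C^*$ depending only on $p$ and $N$; then $\elevel_{\G_\ell}(\mu)$ is attained iff $\mu^\beta\ell\in A$, which is the statement.

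Two facts are quick. By Proposition~\ref{corbaffolungo} (applied with $\mu=1$, using that $\G_\ell$ has a terminal edge of length $\ell$) one gets $[\ell_L,\infty)\subseteq A$ for some $\ell_L>0$, so $A$ is nonempty and unbounded above. For nonexistence at small $\ell$, suppose $u$ is a ground state of mass $1$ on $\G_\ell$. Since $N\ge3$, $\G_\ell$ is not homeomorphic to $\R$, so Corollary~\ref{corlambda} and Theorem~\ref{teocode} force the restriction of $u$ to each of the $N$ half-lines to be $\phi_m(\cdot+y)$ with the \emph{same} $m>0$ and $y\ge0$; hence each half-line carries the same mass $\mu_1=(1-\mu_P)/N$, where $\mu_P:=\Vert u\Vert_{L^2([0,\ell])}^2$. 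Bounding the energy on each half-line below by $\elevel_{\R^+}(\mu_1)$ and using \eqref{energiamezzosol} together with $E(u|_{[0,\ell]},[0,\ell])\ge-\frac1p\Vert u\Vert_\infty^{p-2}\mu_P$, one obtains
\[
E(u,\G_\ell)\ \ge\ -\Bigl(\frac2N\Bigr)^{2\beta}\theta_p\,(1-\mu_P)^{2\beta+1}-\frac1p\Vert u\Vert_\infty^{p-2}\mu_P.
\]
By Lemma~\ref{propgen} one has $\Vert u\Vert_\infty^2\le C_p$ and hence $\mu_P\le C_p\ell$; since $(2/N)^{2\beta}<1$ when $N\ge3$, the right-hand side exceeds $-\theta_p$ once $\ell$ is small enough (in terms of $p$ and $N$), contradicting $E(u,\G_\ell)=\elevel_{\G_\ell}(1)\le-\theta_p$ from \eqref{trapped2}. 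Thus $\inf A\ge\ell_0>0$.

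The crux is \emph{persistence}: if $\ell\in A$ and $\ell'>\ell$ then $\ell'\in A$. Here I would exploit the rigid structure above: the ground state $u_\ell$ on $\G_\ell$ attains its maximum on the pendant, with vanishing derivative at the free tip (Proposition~\ref{propmaxcc}), and its half-line profiles are shifted solitons $\phi_m(\cdot+y)$ — not genuine half-solitons, which are the optimal profiles on $\R^+$. Using the extra pendant length $\ell'-\ell$ to reshape $u_\ell$ near its peak and to push the half-line profiles towards half-solitons, I would construct a competitor $w\in H^1_1(\G_{\ell'})$ with $E(w,\G_{\ell'})\le E(u_\ell,\G_\ell)=\elevel_{\G_\ell}(1)\le-\theta_p=E(\phi_1,\R)$; Corollary~\ref{oper} would then produce a ground state on $\G_{\ell'}$. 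I expect this step to be the main obstacle: the construction must beat the renormalization of the $L^2$ norm back to $1$ (which amplifies the nonlinear term more than the kinetic one), and it is precisely here that the particular geometry of $\G_\ell$ enters, rather than a general principle.

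Granting persistence, $A$ is an interval, bounded away from $0$ and unbounded above, so $A=(C^*,\infty)$ or $A=[C^*,\infty)$ with $C^*:=\inf A\in(0,\infty)$, depending only on $p$ and $N$. To see that $C^*\in A$, I would invoke Theorem~\ref{stab}: choosing $\ell_n\downarrow C^*$ with $\ell_n\in A$, the graph $\G_{\ell_n}$ is obtained from $\G_{C^*}$ by attaching at the free tip of its pendant a single edge $\K_n$ of length $\ell_n-C^*\to0$, and each $\G_{\ell_n}$ carries a ground state of mass $1$; hence so does $\G_{C^*}$. Therefore $A=[C^*,\infty)$, which after undoing the scaling is exactly the asserted equivalence.
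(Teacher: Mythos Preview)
Your overall architecture matches the paper's: scale to $\mu=1$, show that $A$ is nonempty (Proposition~\ref{corbaffolungo}), bounded away from $0$, upward-closed (persistence), and closed at its left endpoint via Theorem~\ref{stab}. Your argument for nonexistence at small $\ell$ is in fact different from the paper's and more direct: the paper does not estimate the energy, but simply observes that if ground states existed for arbitrarily small $\ell$, then Theorem~\ref{stab} (with $\K_n$ a short pendant) would force a ground state on the bare star $\G_0$, which is impossible since $\G_0$ satisfies assumption~(H). Your quantitative bound via $\elevel_{\R^+}$ on each identical half-line is correct and yields the same conclusion.

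The genuine gap is the persistence step, which you explicitly leave open. Your sketch (``reshape near the peak'', ``push the half-line profiles towards half-solitons'', then ``beat the renormalization of the $L^2$ norm'') points in the wrong direction: the paper's competitor is \emph{exactly} mass-preserving, so no renormalization ever enters. The construction is this. By Theorem~\ref{teocode} the restriction of the ground state $u$ to each of the $N$ half-lines is the \emph{same} function $\phi$. Set $\delta=(\ell'-\ell)/N$ and remove the initial segment $[0,\delta)$ from every half-line; the truncated half-lines, re-glued at their new common origin, become the $N$ half-lines of $\G_{\ell'}$, still carrying $u$. The $N$ discarded identical pieces $\phi|_{[0,\delta)}$ are replaced by a single horizontally stretched and reflected copy $v(x)=\phi(\delta-x/N)$ on $I=[0,N\delta]=[0,\ell'-\ell]$, which is inserted between the new vertex and the original pendant $I_\ell$ (continuity holds since $v(0)=\phi(\delta)$ and $v(N\delta)=\phi(0)=u(\vv)$). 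Passing from the $N$ copies of $\phi|_{[0,\delta)}$ to $v$ on $I$ preserves every $L^r$ norm, so the total mass remains $1$ and the potential term is unchanged, while the kinetic contribution of these pieces is strictly reduced. Hence $E(v,\G_{\ell'})\le E(u,\G_\ell)$ and Corollary~\ref{oper} gives a ground state on $\G_{\ell'}$. This stretching surgery is the missing idea, and it exploits the star geometry precisely as you anticipated.
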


\begin{proof}
By scaling, we may assume that $\mu=1$, and define $C^*$
as the infimum of those $\ell>0$, such that $\G_\ell$ admits a ground state
of mass $1$ (the finiteness of $C^*$ follows from Proposition~\ref{corbaffolungo}).
We have $C^*>0$, otherwise, if $\G_\ell$ had a ground state
for arbitrarily small $\ell$, then by Theorem~\ref{stab} (applied
with $\K_n$ as in Remark~\ref{rembaf}) $\G_0$ would have a ground state too:
but since $\G_0$ satisfies Assumption (H),
by Theorem~2.5 in \cite{ast} it admits no ground state.
Moreover, considering a sequence $\ell_n\downarrow C^*$ such that $\G_{\ell_n}$
carries a ground state, we see from Theorem~\ref{stab} that $\G_\ell$ has a ground state,
when $\ell=C^*$.

To complete the proof we now show that, if $\ell'>\ell$ and $\G_\ell$ has a ground state
$u$ of mass $1$, then $\G_{\ell'}$ has a ground state too. Relying on Corollary~\ref{oper},
it is sufficient to construct a competitor $v\in H^1(\G_{\ell'})$, of mass $1$, such that
\begin{equation}
\label{compv} E(v,\G_{\ell'})\leq E(u,\G_\ell).
\end{equation}
Our competitor will be constructed starting from
the ground state $u$, and fitting it to $\G_{\ell'}$ by
the following geometric surgery procedure on $\G_\ell$.

By Theorem~\ref{teocode}, the restriction of $u$ to \emph{any} half-line of
$\G_\ell$ is always the \emph{same} function $\phi:[0,+\infty)\to\R$.
One begins by removing from every half-line of $\G_\ell$
its initial interval $[0,\delta)$, where $\delta=(\ell'-\ell)/N$. The remaining (unbounded) portions of these $N$
half-lines, glued at their starting point, form the $N$ half-lines of $\G_{\ell'}$: on this part
of $\G_{\ell'}$ (to which we shall later attach a pendant of length $\ell'$), the competitor $v$ is defined in a natural way, as the
restriction of the original $u$.
We are thus left with
$N$ copies of the interval $[0,\delta)$  
(each carrying a copy of $\phi:[0,\delta)\to\R$)
and the original pendant of $\G_\ell$ (an interval $I_\ell$ of length $\ell$ that carries a portion of
$u$): these can be used to construct the pendant of $\G_{\ell'}$ (and define $v$ there), as follows.
First, on the interval $I=[0,N\delta]=[0,\ell'-\ell]$, we define
\[
v\in H^1(I),\quad
v(x):=\phi(\delta-x/N),\quad
0\leq x\leq N\delta,
\]
that is, a \emph{horizontal stretching} of $\phi_{[0,\delta)}$ by a factor $N$, combined with a
reflection. It is clear that,
on passing from the $N$ copies of $\phi_{[0,\delta)}$, to $v$ over $I$, the overall $L^2$ and
$L^p$ norms (in fact, all the $L^r$ norms) are preserved, while the kinetic part of the energy
is reduced by a factor $1/N$ (which accounts for the inequality in \eqref{compv}).
Then, since by construction $v(0)=\phi(\delta)$, the interval $I$ can be attached
to the $N$
half-lines previously constructed, in such a way that $v$ is continuous. Finally,
the original pendant $I_\ell$ of $\G_\ell$ can be further attached to $I$ (forming
one single pendant of length $\ell'$) and, since $v(N\delta)=\phi(0)=u(\vv)$,
the original $u$ on $I_\ell$ can be used to extend $v$, from $I$ to $[0,\ell']$, in a continuous fashion.
Thus, by construction, the total mass of $v$ equals that of $u$, and \eqref{compv} is satisfied.
\end{proof}

\section{Graphs with one half-line and no ground state}
\label{sectionscopino} In this section we consider noncompact
metric graphs $\G$ consisting of a compact core $\K$ with the
addition of \emph{one} half-line attached to it, and we show that
ground states with a prescribed mass may fail to exist. As
mentioned in the Introduction, this issue is nontrivial since
graphs with just one half-line cannot satisfy assumption~(H) of
\cite{ast} (which would automatically rule out ground states of
any mass): on the contrary, their structure rather seems to favour
a fruitful use of Corollary~\ref{oper}, and this makes the
construction of counterexamples quite involved.

If $\K$ is \emph{compact} metric graph,
 the following Sobolev inequality is well known
(see e.g. Corollary 2.2 in \cite{haeseler})
\begin{equation}
\label{poincare} \Vert u\Vert_{L^\infty(\K)} \leq |\K|^{-\frac 1
2} \Vert u\Vert_{L^2(\K)} + \diam(\K)^{\frac 1 2} \Vert
u'\Vert_{L^2(\K)},\quad\forall u\in H^1(\K),
\end{equation}
where $|\K|$ is the total length of $\K$ while $\diam(\K)$ is its
diameter. It turns out that, for fixed mass, a big length and a
small diameter may represent an obstruction to the existence of
ground states.

\begin{theorem}\label{teoscopino}
There exists a number $\eps>0$, depending only on $p$, with the
following property. If $\mu>0$ and $\G$ consists of one half-line
and a compact core $\K$ satisfying
\begin{equation}
\label{assKpoinc} \max\left\{ \mu^\beta \diam(\K),\frac 1
{\mu^\beta |\K|}\right\}<\eps,
\end{equation}
then $\G$ has no ground state of mass $\mu$.
\end{theorem}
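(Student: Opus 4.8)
The plan is to argue by contradiction: assume $\G$ (consisting of a compact core $\K$ plus one half-line, attached at a vertex $\vv$) admits a ground state $u$ of mass $\mu$ under the smallness hypothesis \eqref{assKpoinc}, and then show that $u$ cannot be a minimizer by exhibiting a competitor of strictly lower energy. By scaling (Remark~\ref{remscal}) we may as well set $\mu=1$, so that \eqref{assKpoinc} becomes $\diam(\K)<\eps$ and $|\K|>1/\eps$, and the target energy to beat is $\elevel_\R(1)=-\theta_p$: it suffices, by Corollary~\ref{oper}, to produce $v\in H^1_1(\G)$ with $E(v,\G)\le -\theta_p$, which (since $u$ would then not be the ground state, as $\elevel_\G(1)\ge\elevel_{\R^+}(1)=-2^{2\beta}\theta_p<-\theta_p$ could only be beaten, not equalled, by a true minimizer) forces a ground state to exist --- but we will instead aim directly at a contradiction with minimality. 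The competitor will come from the single half-line: restricted to it, $u$ is $\phi_m(\cdot+y)$ for some $m,y$ by Corollary~\ref{corlambda}, with $C_p^{-1}\le m\le C_p$.

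First I would locate where the $L^\infty$ norm of $u$ is attained. Since $\G$ has only one half-line, Proposition~\ref{propmaxcc} does not directly apply, so I would instead use the a priori bounds of Lemma~\ref{propgen}: $\|u\|_{L^\infty(\G)}^2\ge C_p^{-1}$ and $\int_\G|u'|^2\le C_p$. On the compact core, the Sobolev inequality \eqref{poincare} gives
\[
\|u\|_{L^\infty(\K)}\le |\K|^{-1/2}\|u\|_{L^2(\K)}+\diam(\K)^{1/2}\|u'\|_{L^2(\K)}
\le |\K|^{-1/2}+\diam(\K)^{1/2}\,C_p^{1/2},
\]
which under \eqref{assKpoinc} is at most $\eps^{1/2}(1+C_p^{1/2})$, hence smaller than $C_p^{-1/2}\le\|u\|_{L^\infty(\G)}$ once $\eps$ is chosen small depending only on $p$. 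Therefore the maximum of $u$ is attained on the half-line, which means the shift $y$ in $u=\phi_m(\cdot+y)$ on the half-line satisfies $y\le 0$ (the half-soliton is increasing up to its peak): equivalently, $u(\vv)=\phi_m(y)\le\phi_m(0)=\|u\|_{L^\infty}$ and $u$ restricted to the half-line first increases from $u(\vv)$ to its max, then decreases to $0$.

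Now comes the main construction, which I expect to be the real obstacle. The idea is that the large total length $|\K|$ of the core carries a substantial amount of mass at a very low height (because $\|u\|_{L^\infty(\K)}$ is tiny), and this mass is being ``wasted'': on the core, $u\le\eps^{1/2}(1+C_p^{1/2})$, so $\int_\K|u|^p\le \|u\|_{L^\infty(\K)}^{p-2}\int_\K|u|^2$ is negligible compared to the $\tfrac1p\int_\G|u|^p\ge C_p^{-1}$ coming from Lemma~\ref{propgen}; meanwhile $\tfrac12\int_\K|u'|^2\ge 0$ contributes nonnegatively to the energy. I would estimate $\sigma:=\int_\K|u|^2\,dx$ and split $E(u,\G)=E(u,\K)+E(u,\mathcal H)$ where $\mathcal H$ is the half-line; on $\mathcal H$, $u$ has mass $1-\sigma$ and, being a portion of a soliton with a shift $y\le 0$, its energy is at least $\elevel_{\R^+}(1-\sigma)$ only if the peak were at the endpoint, but with $y<0$ the energy on $\mathcal H$ is strictly \emph{larger} than the half-soliton value. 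The cleanest route is: build the competitor $v$ on $\G$ by \emph{deleting} the core, reattaching the half-line to itself (so $\G$ becomes just $\R^+$ with a rescaled function of mass $1$), or more carefully, transplanting the core's mass onto the half-line via a monotone rearrangement so that the resulting function on $\R^+$ lies below a half-soliton; since rearrangement decreases the kinetic energy and the core contributed $o(1)$ to the nonlinear term but $\ge 0$ to the kinetic term, one gets $E(v,\R^+)\le \elevel_{\R^+}(1)+o(1)$ as $\eps\to 0$, which is $<-\theta_p$ for $\eps$ small. The delicate point is quantifying that, because $y<0$ is possible but could be tiny, the half-line part of $u$ might itself already be close to a half-soliton, in which case the contradiction must be squeezed out entirely from the core: here I would show that if $y$ is near $0$ then $u(\vv)$ is near $\|u\|_{L^\infty}\ge C_p^{-1/2}$, which together with continuity of $u$ and $\|u'\|_{L^2(\K)}\le C_p^{1/2}$ and $\diam(\K)<\eps$ forces $u\ge C_p^{-1/2}-\eps^{1/2}C_p^{1/2}$ \emph{everywhere} on $\K$, hence $\sigma\ge (C_p^{-1/2}-\eps^{1/2}C_p^{1/2})^2|\K|\ge (C_p^{-1/2}-\eps^{1/2}C_p^{1/2})^2/\eps$, which blows up as $\eps\to 0$ and contradicts $\sigma<1$. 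So for small $\eps$ the shift $y$ must be bounded away from $0$ by a constant depending only on $p$, giving a \emph{definite} energy surplus $E(u,\mathcal H)-\elevel_{\R^+}(\text{mass on }\mathcal H)\ge c_p>0$ on the half-line, while the core contributes $o(1)$; rearranging everything onto $\R^+$ then strictly beats $\elevel_{\R^+}(1)\ge\elevel_\G(1)$, contradicting that $u$ is a ground state. Choosing $\eps$ small enough, depending only on $p$, to make all these estimates simultaneously effective completes the proof.
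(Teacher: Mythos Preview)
Your setup through the Sobolev estimate on $\K$ and the conclusion $y<0$ is correct and matches the paper's step~(i). The opening paragraph, however, is tangled: invoking Corollary~\ref{oper} to produce $v\in H^1_1(\G)$ with $E(v,\G)\le-\theta_p$ would \emph{establish} existence of a ground state, not rule it out, so that route is backward. What you actually need (and what the paper proves) is the direct inequality $E(u,\G)>E(\phi_\mu,\R)=-\theta_p$, which contradicts \eqref{trapped2}.

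The real gap is in your endgame. The claim $E(u,\mathcal H)-\elevel_{\R^+}(1-\sigma)\ge c_p>0$ is asserted but not proved, and your closing step ``rearranging everything onto $\R^+$ then strictly beats $\elevel_{\R^+}(1)\ge\elevel_\G(1)$'' cannot work: the inequality goes the other way ($\elevel_{\R^+}(1)\le\elevel_\G(1)$ by \eqref{trapped}), and no function on $\R^+$ can beat $\elevel_{\R^+}(1)$. Even if you repair the target to $E(u,\G)>-\theta_p$, your route via $\elevel_{\R^+}(1-\sigma)$ requires control on $\sigma=\int_\K|u|^2$, which you never obtain: $\|u\|_{L^\infty(\K)}^2\lesssim\eps$ but $|\K|>1/\eps$, so $\sigma$ is not forced small, and if $\sigma$ is small then $\elevel_{\R^+}(1-\sigma)\approx-2^{2\beta}\theta_p$, demanding a surplus $c_p>(2^{2\beta}-1)\theta_p$ that your non-monotonicity argument only gives \emph{marginally} (with $\approx$, not $>$) in the regime $m\approx 1$, $|y|$ large.

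The paper avoids this by a different mechanism. It never compares to $\elevel_{\R^+}$; instead it writes $E(u,\mathcal H)\ge -\theta_p m^{2\beta+1}-C\phi_m(y)^2$ via a soliton-tail estimate (l'H\^opital on $\int_{-\infty}^z|\phi_1'|^2/\phi_1(z)^2$), and couples this with a mass-balance estimate $\mu-m\ge\sigma-C\phi_m(y)^2$ (same l'H\^opital trick on $\int_{-\infty}^z|\phi_1|^2$). The mean value theorem then converts $\mu^{2\beta+1}-m^{2\beta+1}$ into a gain $\gtrsim\mu^{2\beta}\sigma$, yielding
\[
E(u,\G)-E(\phi_\mu,\R)\ge C^{-1}\mu^{2\beta}\sigma+\tfrac12\int_\K|u'|^2-C\mu^\beta\phi_m(y)^2-o(1)\cdot\sigma.
\]
Finally, since $\phi_m(y)=u(\vv)\le\|u\|_{L^\infty(\K)}$, the Sobolev bound \eqref{poincare} absorbs the loss term into the two gains for small $\eps$. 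The crucial idea you are missing is that the \emph{mass} $\sigma$ sitting on $\K$ forces $m<\mu$ (up to $O(\phi_m(y)^2)$), and the strict concavity of $t\mapsto-\theta_p t^{2\beta+1}$ converts this mass deficit into an energy gain proportional to $\sigma$ itself --- that is where the contradiction comes from, not from any rearrangement surplus on the half-line.
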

\begin{proof}
Assuming $\eps$ small enough and the existence of a ground state
$u\in H^1_\mu(\G)$, we shall prove that
\begin{equation}
\label{contrad} E(u,\G)-E(\phi_\mu,\R)
> 0,
\end{equation}
which is a contradiction since it violates the second inequality
in \eqref{trapped2}.

Choosing a coordinate $x\in [0,\infty)$ on the half-line
$\G\setminus\K$, by Corollary~\ref{corlambda} the restriction of
$u$ to $\G\setminus\K$ takes the form
\[
 \phi_m(x+y),\quad x\geq 0,
\]
for some $y\in \R$ and $m>0$ satisfying \eqref{stimaMuniv}.

\medskip

\noindent{\em (i) Proof that $y<0$, and $L^\infty$ estimate. }
Combining \eqref{poincare} with \eqref{assKpoinc} yields
\begin{equation}
\label{poinc2} \Vert u\Vert_{L^\infty(\K)}^2 \leq
2\eps\left(\mu^{\beta}\int_\K u^2\,dx + \mu^{-\beta} \int_\K
|u'|^2\,dx\right)
\end{equation}
and, since $\Vert
u\Vert_{L^2(\K)}^2\leq 
\mu$ and $\Vert u'\Vert_{L^2(\K)}^2\leq \Vert u'\Vert_{L^2(\G)}^2$,
using \eqref{st29} we obtain
\begin{equation}
\label{st31} \Vert u\Vert_{L^\infty(\K)}^2 \leq \eps
C\mu^{\beta+1}.
\end{equation}
Now, if $y$ were positive, $\phi_m(x\!+\!y)$ would be decreasing
for $x\!\geq\! 0$ and, since the half-line of $\G$ is attached to
$\K$ at the point $x=0$, we would have $\Vert
u\Vert_{L^\infty(\G)}^2\!=\!\Vert u\Vert_{L^\infty(\K)}^2$. This,
in view of \eqref{st31}, would violate the first inequality in
\eqref{st30} for small $\eps$, hence we conclude that $y<0$.

\medskip

\noindent{\em (ii) Estimate for $\mu-m$. } Since $\Vert
\phi_m\Vert_{L^2(\R)}^2=m$, from the scaling rule
\eqref{scalingrule} written when $\mu=m$, since $2\alpha-\beta=1$
we find by a change of variable
\begin{equation}
\label{normadue} \int_{\G\setminus\K} |u|^2\,dx
=m-\int_{-\infty}^y |\phi_m|^2\,dx =m-m\int_{-\infty}^{z}
|\phi_1|^2\,dx,\quad z:=m^\beta y.
\end{equation}
As $z<0$, to estimate the last integral we introduce the auxiliary
function
\begin{equation*}
f(x):=\frac {\int_{-\infty}^x \left|
\phi_1(s)\right|^2\,ds}{\phi_1(x)^2}\quad (x\leq 0),
\end{equation*}
and observe that,  from l'H\^{o}pital's rule,
\[
\lim_{x\to-\infty} f(x)= \lim_{x\to-\infty}
\frac{\phi_1(x)^2}{2\phi_1(x)\phi_1'(x)}= \lim_{x\to-\infty}
\frac{\phi_1'(x)}{2\phi_1(x)}\in (0,+\infty)
\]
(the finiteness of the last limit follows easily from the explicit
form of $\phi_1$, discussed in Remark~\ref{remsol}). Since $f$ is
continuous on $(-\infty,0]$, its supremum $C$ is then finite, and
depends only on $p$. In particular $f(z)\leq C$, that is
\[
\int_{-\infty}^{z} |\phi_1|^2\,dx\leq C\phi_1(z)^2=C
\phi_1(m^\beta y)^2=C m^{-2\alpha}\phi_m(y)^2.
\]
Plugging into \eqref{normadue}, since $1-2\alpha=-\beta$ we obtain
\[
\int_{\G\setminus\K} |u|^2\,dx \geq
 m-C m^{-\beta} \phi_m(y)^2
\geq
 m-C \mu^{-\beta} \phi_m(y)^2
\]
having used \eqref{stimaMuniv}. Therefore, since $\mu=\int_\K
|u|^2\,dx+ \int_{\G\setminus\K} |u|^2\,dx$, we find
\begin{equation}
\label{mumenoM} \mu-m \geq \int_\K |u|^2\,dx-C \mu^{-\beta}
\phi_m(y)^2.
\end{equation}

\noindent{\em (iii) Energy estimate on the half-line. } As before,
using \eqref{scalingrule} when $\mu=m$ and letting $z=m^\beta y$,
by a change of variable we have
\begin{equation}\label{st34}
\begin{split}
 E(u,\G\setminus\K)=\int_y^\infty {\textstyle\left(\frac{|\phi_m'|^2}{2}-\frac{|\phi_m|^p}{p}\right)dx}
=m^{2\beta+1}\int_{z}^\infty {\textstyle\left(\frac{|\phi_1'|^2}{2}-\frac{|\phi_1|^p}{p}\right)dx}\\
\geq
m^{2\beta+1}\left(E(\phi_1,\R)
-\int_{-\infty}^{z} {\textstyle\frac{|\phi_1'|^2}{2}dx}\right) =
-m^{2\beta+1}\left(\theta_p +\int_{-\infty}^{z}
{\textstyle\frac{|\phi_1'|^2}{2}} dx\right)
\end{split}
\end{equation}
where $\theta_p$ is as in \eqref{energiasol}. Arguing as in (ii),
now applying l'H\^{o}pital's rule to the function
$f(x)=\int_{-\infty}^x |\phi'_1|^2\,ds/\phi_1(x)^2$, we have
$f(x)\leq C_p$ for $x\leq 0$, so that
\[
m^{2\beta+1}\int_{-\infty}^{z} {\textstyle\frac{|\phi_1'|^2}{2}dx}
\leq C  m^{2\beta+1}\phi_1(z)^2=C  m^{\beta}\phi_m(y)^2 \leq C
\mu^{\beta}\phi_m(y)^2
\]
having used \eqref{stimaMuniv}. Plugging into \eqref{st34} and
using \eqref{energiasol}, one finds
\begin{equation}
\label{stima18} E(u,\G\setminus\K)-E(\phi_\mu,\R)\geq \theta_p
\left(\mu^{2\beta+1}-m^{2\beta+1}\right)- C
\mu^{\beta}\phi_m(y)^2.
\end{equation}
Now observe that the mean value theorem and \eqref{stimaMuniv}
give
\begin{equation*}
C^{-1}\mu^{2\beta}\leq \frac
{\mu^{2\beta+1}-m^{2\beta+1}}{\mu-m}\leq C\mu^{2\beta}.
\end{equation*}
According to the sign of $\mu-m$, we can use the first or the
second inequality, finding in either case
\[
\mu^{2\beta+1}-m^{2\beta+1}\geq C^{-1}(\mu-m),
\]
which can be combined with \eqref{mumenoM} to obtain
\[
\mu^{2\beta+1}-m^{2\beta+1}\geq C^{-1}\mu^{2\beta}\int_\K
|u|^2\,dx-C\mu^{\beta} \phi_m(y)^2.
\]
Finally, plugging into \eqref{stima18} we have
\begin{equation}
\label{stima19} E(u,\G\setminus\K)-E(\phi_\mu,\R)\geq
C^{-1}\mu^{2\beta}\int_\K |u|^2\,dx-C\mu^{\beta} \phi_m(y)^2.
\end{equation}

\noindent{\em (iv) Energy estimate on $\K$. } We have from
\eqref{st31}
\[
\int_{\K} |u|^p\,dx\leq \Vert u\Vert_{L^\infty(\K)}^{p-2}
\int_{\K} |u|^2\,dx\leq C\eps^{\frac{p-2}{2}}\mu^{2\beta}\int_{\K}
|u|^2\,dx,
\]
and therefore
\[
E(u,\K)\geq \frac 1 2\int_\K
|u'|^2\,dx-C\eps^{\frac{p-2}{2}}\mu^{2\beta}\int_{\K} |u|^2\,dx.
\]
Summing this inequality with \eqref{stima19}, and observing that
$\phi_m(y)\leq\Vert u\Vert_{L^\infty(\K)}$, for small $\eps$ we
obtain
\[
\begin{split}
E(u,\G)-E(\phi_\mu,\R) \geq
\left(C^{-1}-C\eps^{\frac{p-2}{2}}\right)\mu^{2\beta}\int_{\K}
|u|^2\,dx + \frac 1 2\int_\K |u'|^2\,dx \\-C\mu^{\beta}
\phi_m(y)^2 \geq \frac{C^{-1}}2 \mu^{2\beta}\int_{\K} |u|^2\,dx +
\frac 1 2\int_\K |u'|^2\,dx -C\mu^{\beta} \Vert
u\Vert_{L^\infty(\K)}^2.
\end{split}
\]
Finally, factoring $\mu^\beta$ in the last expression, and using
\eqref{poinc2} to estimate $\Vert u\Vert_{L^\infty(\K)}$, we see
that \eqref{contrad} is established, if $\eps$ is small enough.
\end{proof}
It is clear from the proof that the sole scope of
assumption~\eqref{assKpoinc} is to guarantee the validity of
\eqref{poinc2} via \eqref{poincare}, and the result is still valid
if  one merely assumes \eqref{poinc2}. Assumption
\eqref{assKpoinc}, however, is handy to construct concrete
examples, as we now show.
\begin{corollary}\label{corscopino}
Let $\G$ be a star-graph made up of one half-line and $n$ edges of
length $\ell$, and let $\mu>0$. If $\mu^\beta\ell$ is small enough
(depending only on $p$), and $n$ is large enough (depending on $p$
and $\mu^\beta\ell$), then $\G$ has no ground state of mass $\mu$.
\end{corollary}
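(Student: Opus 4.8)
The plan is to obtain Corollary~\ref{corscopino} as a direct application of Theorem~\ref{teoscopino}, by checking that a star-graph with $n$ pendant edges of length $\ell$, attached to a single half-line, has a compact core $\K$ whose diameter and total length make assumption~\eqref{assKpoinc} hold as soon as $\mu^\beta\ell$ is small and $n$ is large. First I would identify the compact core: removing the unique half-line leaves exactly the $n$ edges of length $\ell$ sharing the central vertex $\vv$, so $\K$ is a star with $n$ spokes of length $\ell$. Its total length is $|\K|=n\ell$, while its diameter is $\diam(\K)=2\ell$ (the distance between the free endpoints of two distinct spokes; if $n=1$ it is $\ell$, but the interesting regime is $n$ large anyway).

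Next I would translate the hypotheses. By scaling (Remark~\ref{remscal}) one may take $\mu=1$, so that \eqref{assKpoinc} becomes
\[
\max\Bigl\{\,2\ell,\ \frac{1}{n\ell}\,\Bigr\}<\eps,
\]
with $\eps$ the constant furnished by Theorem~\ref{teoscopino}, depending only on $p$. The first condition, $2\ell<\eps$, is exactly the requirement that $\ell$ (equivalently $\mu^\beta\ell$ in the unscaled problem) be small, depending only on $p$. Once such an $\ell$ is fixed, the second condition $1/(n\ell)<\eps$ reads $n>1/(\eps\ell)$, which can be met by choosing $n$ large, depending on $p$ and on $\ell$ (equivalently on $p$ and $\mu^\beta\ell$). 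Thus both constraints in \eqref{assKpoinc} are satisfied, and Theorem~\ref{teoscopino} applies verbatim to conclude that $\G$ has no ground state of mass $\mu=1$, hence — undoing the scaling — of mass $\mu$ for the original data.

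There is essentially no obstacle here: the only point requiring a word of care is the computation of $\diam(\K)$ and $|\K|$ for the star core, and the bookkeeping of which quantity depends on which (one fixes $\ell$ first from the $p$-dependent threshold, then $n$ from the resulting $\ell$-dependent threshold), so that the logical order of the two ``small/large'' choices matches the statement. Everything else is a direct substitution into \eqref{assKpoinc}. I would therefore write the proof in two or three lines: compute $|\K|=n\ell$ and $\diam(\K)\le 2\ell$, reduce to $\mu=1$, observe that $\mu^\beta\ell$ small makes $\mu^\beta\diam(\K)<\eps$ and that, for such $\ell$, taking $n$ large makes $1/(\mu^\beta|\K|)<\eps$, and invoke Theorem~\ref{teoscopino}.
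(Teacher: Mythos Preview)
Your proposal is correct and follows exactly the paper's approach: compute $\diam(\K)=2\ell$ and $|\K|=n\ell$ for the star-shaped core, then apply Theorem~\ref{teoscopino} directly. The paper's proof is in fact just the one-line version you anticipate in your final paragraph.
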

\begin{proof}
The compact core $\K$ satisfies $\diam(\K)=2\ell$
and $|\K|=n\ell$, and the application of Theorem~\ref{teoscopino}
is immediate.
\end{proof}
\begin{remark}
When $p=4$, computations with explicit constants reveal that, in
Corollary~\ref{corscopino}, $n=5$ is sufficient. Note that $n>3$
is necessary, since a $3$-fork graph as in Figure~\ref{3-fork} has
ground states for every $\mu$. It is not known, however, if a
$4$-fork graph may furnish a counterexample.
\end{remark}

\end{document}